\newcommand{\black}{\color[rgb]{0,0,0}}
\newcommand{\red}{\color[rgb]{1,0,0}}
\newcommand{\blue}{\color[rgb]{0,0,1}}
\newcommand{\eps}{\varepsilon}
\newcommand{\cF}{\mathcal F}
\newcommand{\cH}{\mathcal H}
\newcommand{\cU}{\mathcal U}
\newcommand{\cO}{\mathcal O}
\newcommand{\var}{\mathrm{Var}}
\newcommand{\cost}{\operatorname{cost}}
\newcommand{\sfrac}[2]{\mbox{$\frac{#1}{#2}$}}
\newcommand{\II}{\mathbb I}
\newcommand{\cov}{\mathrm{cov}}
\newcommand{\sig}{\sigma}
\newcommand{\1}{1\hspace{-0.098cm}\mathrm{l}}
\renewcommand{\P}{{\mathbb P}}
\renewcommand{\Pr}{{\mathbb P}}
\newcommand{\N}{{\mathbb N}}
\newcommand{\E}{{\mathbb E}}
\newcommand{\R}{{\mathbb R}}
\newcommand{\ID}{{\mathbb D}}
\newcommand{\pr}{\operatorname{pr}}
\theoremstyle{plain}
\newtheorem{theorem}{Theorem}[section]
\newtheorem{prop}[theorem]{Proposition}
\newtheorem{lemma}[theorem]{Lemma}
\newtheorem{cor}[theorem]{Corollary}
\newtheorem{defi}[theorem]{Definition}
\newtheorem{ext}[theorem]{Extension}
\newtheorem{algo}[theorem]{Algorithm}
\theoremstyle{definition}
\newtheorem{rem}[theorem]{Remark}
\begin{document}

\title[Multilevel stochastic approximation]%
{General multilevel adaptations for stochastic approximation algorithms}

\author[]
{Steffen Dereich}
\address{Steffen Dereich\\
Institut f\"ur Mathematische Stochastik\\
Fachbereich 10: Mathematik und Informatik\\
Westf\"alische Wilhelms-Universit\"at M\"unster\\
Orl\'eans-Ring 10\\
48149 M\"unster\\
Germany}
\email{steffen.dereich@wwu.de}


\thanks{Funded by the Deutsche Forschungsgemeinschaft (DFG, German Research Foundation) under Germany's Excellence Strategy EXC 2044 –390685587, Mathematics Münster: Dynamics–Geometry–Structure.}
\keywords{Stochastic approximation;  multilevel Monte Carlo; Ruppert-Polyak average; central limit theorem; stable convergence}
	\subjclass[2010]{Primary 62L20; Secondary 60J05, 65C05}

\begin{abstract}
In this article we establish central limit theorems for multilevel  Polyak-Ruppert-averaged stochastic approximation schemes. We work under very mild technical assumptions and consider the slow regime in wich typical errors decay like $N^{-\delta}$ with $\delta\in(0,\frac 12)$ and the critical regime in which errors decay of order $N^{-1/2}\sqrt{\log N}$ in the runtime $N$ of the algorithm.
\end{abstract}

\maketitle

\section{Introduction}\label{s1}

Let  $D\subset \R^d$ be a closed set and $U$ a random variable on a  probability space $(\Omega,\cF,\Pr)$
with values in a set $\mathcal U$ equipped with some $\sigma$-field.
We consider stochastic approximation algorithms for the computation of zeroes of functions $f\colon D\to\R^d$ 
of the form
\[
f(\theta)=\E[F(\theta,U)], 
\]
where $F\colon D\times \mathcal U\to \R^d$ is a product measurable function such that all expectations are well-defined.  

Stochastic approximation methods form  a popular class of optimization algorithms that attracted significant interest in previous years. The original idea stems from an article by Robbins and Monro \cite{RM51}. Later it was found by Ruppert \cite{Rup82} and Polyak \cite{Pol90,PJ92}  that averaging  improves the order of convergence in the case of slower decaying stepwidths.
Following these original papers a variety of results were derived and we refer the reader to the monographs by  \cite{BMP90,Duf96,KY03} for more details.

In this article we focus on the case where the random variable $F(\theta,U)$ cannot be  simulated directly so that one has to work with appropriate approximations in numerical simulations. In previous years the multilevel paradigm \cite{Gil08} has proven to be a very efficient tool in the numerical computation of expectations. This advantage prevails for multilevel stochastic approximation algorithms \cite{Fri15, DerMue18}. In this article we derive central limit theorems under general assumptions on the underlying approximations. In particular, we investigate the role of the chosen parameters.

The article has three main results. We prove a new central limit theorem for Ruppert-Polyak averaging which extends previous research of Mokkadem and Pelletier~\cite{MP11}. Here we do not impose regular variation assumptions on the involved key quantites. Furthermore, we establish a new central limit theorem  for an averaged multilevel stochastic approximation scheme in the slow regime in wich typical errors decay like $N^{-\delta}$  in the runtime $N$ of the algorithm with $\delta\in(0,\frac 12)$, see Section~\ref{sec_1_2-1}. Finally we provide a central limit theorem for the  critical regime in which errors decay like $N^{-1/2}\sqrt{\log N}$,  see Section~\ref{sec_1_3-1}.
The remaining fast regime in which errors decay like $N^{-1/2}$ is omitted in our analysis. In this setting unbiased multilevel estimates of~\cite{McL11} and~\cite{RG15} can be used  in which case a central limit theorem immediately follows from classical theory.


We will use Landau notation and write $a_n=\cO(b_n)$ for two sequences $(a_n)$ and $(b_n)$ if $b_n$ is eventually strictly positive and $\limsup |a_n|/b_n<\infty$.  If, additionally, $\lim |a_n|/b_n=0$, we write $a_n=o(b_n)$. Furthermore, for two eventually strictly positive sequences $(a_n)$ and $(b_n)$ we write $a_n\sim b_n$ if $\lim a_n/b_n=1$.

\subsection{CLT for Ruppert-Polyak averaging}\label{sec_1_1}

Our analysis is based on a new central limit theorem for Ruppert-Polyak averaging. Compared to previous research we will not assume that the key quantities are regularly varying.
In the following, $(\Omega,\cF,(\cF_n)_{n\in\N_0},\P)$ denotes a filtered probability space and the notions of martingales and adapted processes always refer to the latter space.

Let us introduce the central dynamical system. 
Let $f:D\to\R^d$ be a measurable function on a   closed set $D\subset \R^d$. 
We consider an adapted $D$-valued dynamical system $(\theta_n)_{n\in\N_0}$ satisfying for all $n\in\N$ \begin{equation}\label{dynsys2}
\theta_{n}=\Pi\bigl(\theta_{n-1}+\gamma_n \bigl( f(\theta_{n-1})+  R_n + D_{n}\bigr)\bigr),
\end{equation}
where $\theta_0\in D$ is a fixed deterministic starting value, 
\begin{enumerate}
\item[(0)]  $\Pi:\R^d\to D$ is a  measurable function with $\Pi|_D$ being the identity on $D$ and $\Pi|_{D^c}$ mapping into $\partial D$, the \emph{projection},
\item[(I)]  $(R_n)_{n\in\N}$ is an adapted  process, the \emph{remainder/bias},
\item[(II)] $(D_n)_{n\in\N}$ is a sequence of  \emph{martingale differences}, 
\item[(III)] $(\gamma_n)_{n\in\N}$ is a sequence of strictly positive reals, the \emph{step-widths}.
\end{enumerate}

Stochastic approximation algorithms may be applied to numerically estimate $L$-contracting zeroes of the function $f$.

\begin{defi}
For $L\geq  0$ a zero $\theta^*\in \mathrm{interiour}(D)$  of $f$ is called \emph{$L$-contracting zero} if  there is a matrix $H\in\R^{d\times d}$ with
 $$
 \sup\{\mathrm{Re}(\lambda): \lambda\text{ e.v.\ of }H\}< -L
 $$
 and
$$
f(\theta)=H(\theta -\theta^*)+ o(|\theta-\theta^*|)
$$
as $\theta\to \theta^*$. We say that $\theta^*$ is a contracting zero of $f$ if it is $0$-contracting.
\end{defi}



As observed by Ruppert and Polyak averaging may accelerate  stochastic approximation algorithms. Given a sequence $(b_n)_{n\in\N}$ of strictly positive reals we call
\begin{align}\label{eq_RupPol}
\bar \theta_n= \frac 1{\bar b_{n}} \sum_{k=1}^n b_k\theta_k\qquad (n\in\N)
\end{align}
with  $\bar b_n=\sum_{k=1}^n b_k$ the $(b_n)$-averaged version of $(\theta_n)$.

\begin{theorem}\label{theo_RP_main}
We consider the dynamical system $(\theta_n)_{n\in\N}$ introduced in~(\ref{dynsys2}) and assume that $\theta^*\in D$ is a $L$-contracting zero of $f$ for a $L>0$. Let further $\lambda \in (0,1]$ and  $(\delta_n^\mathrm{bias})_{n\in\N}$, $(\delta_n^\mathrm{diff})_{n\in\N}$ and $(\delta_n)_{n\in\N}$  be  sequences of strictly positive reals, set
$$
\eps_n^\mathrm{bias}=\frac1{\bar b_n} \sum_{k=1}^n b_k \delta_k^\mathrm{bias}\text{ \ and \ }\eps_n^\mathrm{diff}=\frac1{\bar b_n} \sqrt{\sum_{k=1}^n (b_k \delta_k^\mathrm{diff})^2}
$$
and assume that  the following assumptions are satisfied:
\begin{enumerate}
\item[(A)]\emph{Regularity of $(\gamma_n)$, $(b_n)$ and $(\delta_n)$.} $(\gamma_n)_{n\in\N}$ is decreasing with limit zero and $$\lim_{n\to\infty} n\gamma_n=\infty, \ \limsup_{n\to\infty}\frac{\gamma_{n-1}}{\gamma_{n}}<\infty, 
$$ 
$$\limsup_{n\to\infty} \frac 1{\gamma_n} \bigl(1- \frac {\delta_n}{\delta_{n-1}}\bigr) \leq L  \text{ \  and \ }\lim_{n\to\infty} \frac 1{\gamma_n} \Bigl( \frac{b_{n+1}\gamma_n}{b_n\gamma_{n+1}}-1\Bigr)= 0 .
$$
\item[(B)] \emph{Regularity of $f$ in $\theta^*$.} As $\theta\to\theta^*$,
$$
|f(\theta)-H(\theta-\theta^*)|= \mathcal O(|\theta-\theta^*|^{1+\lambda}) .
$$
\item[(C)] \emph{Tale estimates for $(\delta_n^\mathrm{bias})$ and $(\delta_n^\mathrm{diff})$.} One has
 $$
 \sum_{k=1}^\infty b_k \delta_k^\mathrm{bias}=\infty \text{ \ and \  }\sum_{k=1}^\infty (b_k\delta_k^\mathrm{diff})^2=\infty
 $$
and  for arbitrary $\N$-valued sequence $(L(n))_{n\in\N}$ with $L(n)\leq n$ and $n-L(n)=o(n)$
 $$
 \lim_{n\to\infty} \frac{\sum_{k=L(n)+1}^n b_k\delta_k^\mathrm{bias}}{\sum_{k=1}^n b_k\delta_k^\mathrm{bias}}=0 \text{ \ and \ } \lim_{n\to\infty} \frac{\sum_{k=L(n)+1}^n (b_k\delta_k^\mathrm{diff})^2}{\sum_{k=1}^n (b_k\delta_k^\mathrm{diff})^2}=0.
 $$
 \item[(D)]\emph{Assumptions on the bias $(R_n)$.} One has on $\{\theta_n\to\theta^*\}$, up to nullsets,
 $$
(\delta_n^{\mathrm{bias}})^{-1} R_n\to \mu\in\R^d
$$
 and for a $\eps'>0$
 $$
 \limsup_{n\to\infty} \delta_n^{-1} \E[\1_{\{\theta_{n-1}\in B(\theta^*,\eps')\}} |R_n|^2]^{1/2}<\infty.
 $$
 \item[(E)]\emph{Assumptions on the diffusivity $(D_n)$.} On $\{\theta_n\to\theta^*\}$,
 $$
(\delta_n^{\mathrm{diff}})^{-2} \cov(D_n|\cF_{n-1})\to \Gamma,\text{ \ up to nullsets,}
$$
$$ \lim_{n\to\infty} (\eps_n^\mathrm{diff})^{-2} \sum_{l=1}^n \frac {b_l^2}{\bar b_n^2} \E[\1_{\{|D_l|>\eps\bar b_n \eps_n^\mathrm{diff}/b_l\}}|D_l|^2|\cF_{l-1}]=0, \text{ \ in probability},
$$
and for an $\eps'>0$ one has
$$
\limsup_{n\to\infty} \bigl( \frac{\delta_n}{\sqrt{\gamma_n}}\bigr)^{-1} \E[\1_{ B(\theta^*,\eps')}(\theta_{n-1}) |D_n|^2]^{1/2}<\infty.
$$
\item[(F)] \emph{Negligibility of linearization error.} The term
\begin{align}\label{eq74598}
\frac 1{\bar b_n} \sum_{l=1}^n  b_l \delta_l^{1+\lambda} 
\end{align}
is of order $o(\eps_n^\mathrm{bias})$ or $o(\eps_n^\mathrm{diff})$.
\end{enumerate}
Then there exists $(\vartheta_n)_{n\in\N}$ such that on $\{\theta_n\to \theta^*\}$
$$
(\eps_n^\mathrm{bias})^{-1} (\vartheta_n-\theta^*) = - H^{-1} \mu\text{, \ in probability}
$$
and
$$
 (\eps_n^\mathrm{diff})^{-1}  (\bar \theta_n-  \vartheta_n) \Rightarrow \mathcal N(0,H^{-1} \Gamma (H^{-1})^\dagger).
$$
\end{theorem}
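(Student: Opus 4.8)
The plan is to work on the event $\{\theta_n \to \theta^*\}$ and reduce everything to a linear stochastic approximation problem in the shifted variable $x_n = \theta_n - \theta^*$. First I would use assumption (B) together with the standard localization argument: on $\{\theta_n \to \theta^*\}$, the projection $\Pi$ is eventually inactive and the recursion (\ref{dynsys2}) becomes, for $n$ large,
\begin{equation*}
x_n = x_{n-1} + \gamma_n\bigl(H x_{n-1} + \rho_n + D_n + \zeta_n\bigr),
\end{equation*}
where $\rho_n = R_n$ is the bias, $D_n$ the martingale part, and $\zeta_n = f(\theta_{n-1}) - H x_{n-1} = \cO(|x_{n-1}|^{1+\lambda})$ is the linearization error. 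The hypotheses on the order of magnitude of $x_n$ (governed by $\delta_n$, via (D) and (E)) combined with (F) are precisely designed so that the contribution of $\zeta_n$ to the averaged quantity $\bar\theta_n - \theta^*$ is negligible against both $\eps_n^{\mathrm{bias}}$ and $\eps_n^{\mathrm{diff}}$. So the first serious step is to establish a priori bounds showing $\E[\1_{B(\theta^*,\eps')}(\theta_{n-1})|x_{n-1}|^2]^{1/2} = \cO(\delta_n)$ (or similar in probability), which is where the two $\limsup$-type bounds in (D) and (E) feed in.

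Next I would solve the linear recursion explicitly. Writing the solution via the product of transition matrices $\Phi_{n,k} = \prod_{j=k+1}^n (I + \gamma_j H)$, one gets
\begin{equation*}
x_n = \Phi_{n,0}\, x_0 + \sum_{k=1}^n \gamma_k \Phi_{n,k}\bigl(\rho_k + D_k + \zeta_k\bigr),
\end{equation*}
and then the averaged version
\begin{equation*}
\bar\theta_n - \theta^* = \frac{1}{\bar b_n}\sum_{m=1}^n b_m x_m = \frac{1}{\bar b_n}\sum_{k=1}^n \Bigl(\sum_{m=k}^n b_m \gamma_k \Phi_{m,k}\Bigr)\bigl(\rho_k + D_k + \zeta_k\bigr) + (\text{transient}).
\end{equation*}
The key algebraic identity — this is the heart of the Ruppert--Polyak phenomenon — is that $\sum_{m\ge k} \gamma_k \Phi_{m,k} \approx -H^{-1}$ for the relevant range, so that the weight on the $k$-th increment is $\approx -\frac{b_k}{\bar b_n} H^{-1}$ up to correction terms. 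Making this precise requires the regularity assumptions in (A): $n\gamma_n \to \infty$ ensures $\Phi_{m,k}\to 0$, $\limsup \gamma_{n-1}/\gamma_n < \infty$ and the contractivity margin $L$ control the geometric-type decay of $\Phi_{m,k}$, and the condition $\frac{1}{\gamma_n}(\frac{b_{n+1}\gamma_n}{b_n\gamma_{n+1}} - 1)\to 0$ together with the $\delta_n$-regularity guarantees that replacing $\sum_{m\ge k} b_m\gamma_k\Phi_{m,k}$ by $-b_k H^{-1}$ incurs only a lower-order error after summation against $\delta_k^{\mathrm{bias}}$ or $\delta_k^{\mathrm{diff}}$. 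This is the routine-but-delicate part: Abel summation plus the tail-flatness hypotheses in (C) (which say the "head" $k \le L(n)$ is negligible) let one discard the transient term and the corrections uniformly.

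With the weights identified, the bias part becomes
\begin{equation*}
\frac{1}{\bar b_n}\sum_{k=1}^n b_k(-H^{-1})\rho_k + o(\eps_n^{\mathrm{bias}}) = -H^{-1}\Bigl(\frac{1}{\bar b_n}\sum_{k=1}^n b_k\delta_k^{\mathrm{bias}}\cdot\frac{R_k}{\delta_k^{\mathrm{bias}}}\Bigr) + o(\eps_n^{\mathrm{bias}}),
\end{equation*}
and since $\delta_k^{-\mathrm{bias}} R_k \to \mu$ by (D) while the Cesàro-type weights $b_k\delta_k^{\mathrm{bias}}/\bar b_n$ sum to $\eps_n^{\mathrm{bias}}$ with negligible head by (C), this converges to $-H^{-1}\mu\,\eps_n^{\mathrm{bias}}$ in probability; I would define $\vartheta_n - \theta^*$ to be exactly this bias contribution (plus transients), giving the first assertion. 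The martingale part is
\begin{equation*}
\frac{1}{\bar b_n}\sum_{k=1}^n b_k(-H^{-1})D_k + o(\eps_n^{\mathrm{diff}}),
\end{equation*}
a martingale array with predictable quadratic variation $(\eps_n^{\mathrm{diff}})^{-2}\frac{1}{\bar b_n^2}\sum_k b_k^2 H^{-1}\cov(D_k|\cF_{k-1})(H^{-1})^\dagger \to H^{-1}\Gamma(H^{-1})^\dagger$ by the convergence in (E) and the normalization $\eps_n^{\mathrm{diff}} = \bar b_n^{-1}(\sum(b_k\delta_k^{\mathrm{diff}})^2)^{1/2}$, again using (C) to kill the head. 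The Lindeberg condition is exactly the middle display in (E), so the martingale central limit theorem yields $(\eps_n^{\mathrm{diff}})^{-1}(\bar\theta_n - \vartheta_n) \Rightarrow \mathcal N(0, H^{-1}\Gamma(H^{-1})^\dagger)$.

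The main obstacle I anticipate is the rigorous control of the weight replacement $\sum_{m\ge k} b_m\gamma_k\Phi_{m,k} \rightsquigarrow -b_k H^{-1}$ simultaneously over all $k$, showing the aggregate error is $o(\eps_n^{\mathrm{bias}} \vee \eps_n^{\mathrm{diff}})$ using only the weak regularity in (A) and (C) — without regular variation one cannot appeal to the usual Karamata-type estimates of Mokkadem--Pelletier, so the Abel-summation bookkeeping (splitting at a cutoff $L(n)$, estimating near-diagonal and off-diagonal blocks of $\Phi$ separately) has to be done by hand and is where the technical weight of the argument sits. A secondary subtlety is handling the non-Gaussian conditioning: all convergences in (D)–(E) hold only on $\{\theta_n\to\theta^*\}$ and only "up to nullsets," so stable-convergence / conditioning-on-the-event arguments are needed to pass from the localized linear recursion back to the stated CLT.
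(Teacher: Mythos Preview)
Your proposal is correct and follows essentially the same strategy as the paper. The paper packages the linear-system analysis you describe (explicit representation via $\Phi_{n,k}$, the weight identification $\bar{\cH}[l,n]\to -H^{-1}$ under (A) alone, and its three applications to bias/martingale/remainder) into a stand-alone Theorem~\ref{theo:lin_main} with supporting Lemmas~\ref{le:B3'}--\ref{le:H-1}, and then invokes the $L^2$ a~priori bound (Theorem~\ref{thm1}) for the linearization error; the only nuance you did not spell out is that the paper places the remainder $\zeta_n$ with either the bias block or the diffusive block depending on which alternative of (F) holds, which determines whether $\vartheta_n$ is $\bar\theta_n^{(\mathrm{bias})}+\bar\theta_n^{(\mathrm{rem})}$ or just $\bar\theta_n^{(\mathrm{bias})}$.
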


The introduction of $(\vartheta_n)$  has technical reasons. In the case where one of the terms~$\eps_n^\mathrm{bias}$ or $\eps_n^\mathrm{var}$ asymptotically dominates the other, one can phrase a central limit theorem without the use of $(\vartheta_n)$.

\begin{cor}\label{cor_1}
Assume that all assumptions of Theorem~\ref{theo_RP_main} are satisfied.
\begin{enumerate}
\item If $\eps_n^\mathrm{bias}=\mathcal O(\eps_n^\mathrm{diff})$, then one has on $\{\theta_n\to\theta^*\}$
$$
(\eps_n^\mathrm{diff})^{-1} (\bar \theta_n -\theta^*+\eps_n^\mathrm{bias} H^{-1} \mu)\Rightarrow \mathcal N(0, H^{-1}\Gamma (H^{-1})^\dagger).
$$
\item If $\eps_n^\mathrm{bias}=o(\eps_n^\mathrm{diff})$, then one has on $\{\theta_n\to\theta^*\}$
$$
(\eps_n^\mathrm{diff})^{-1} (\bar \theta_n -\theta^*)\Rightarrow \mathcal N(0, H^{-1}\Gamma (H^{-1})^\dagger).
$$
\item If $\eps_n^\mathrm{diff}=o(\eps_n^\mathrm{bias})$, then one has on $\{\theta_n\to\theta^*\}$
$$
(\eps_n^\mathrm{bias})^{-1} (\bar \theta_n -\theta^*)\to -H^{-1}\mu,\text{ \ in probability.}
$$
\end{enumerate}
\end{cor}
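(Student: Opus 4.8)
The plan is to derive all three statements directly from the two conclusions of Theorem~\ref{theo_RP_main} by writing $\bar\theta_n-\theta^*$ as the sum of the ``fluctuation'' part $\bar\theta_n-\vartheta_n$ and the ``bias'' part $\vartheta_n-\theta^*$, and then applying Slutsky's lemma on the event $\{\theta_n\to\theta^*\}$. The only inputs needed are: (i) $(\eps_n^\mathrm{diff})^{-1}(\bar\theta_n-\vartheta_n)\Rightarrow\mathcal N(0,H^{-1}\Gamma (H^{-1})^\dagger)$; (ii) $(\eps_n^\mathrm{bias})^{-1}(\vartheta_n-\theta^*)\to -H^{-1}\mu$ in probability; and (iii) the elementary fact that a sequence converging in distribution is bounded in probability, so that its product with an $o_\P(1)$ sequence is again $o_\P(1)$.

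For~(1), assume $\eps_n^\mathrm{bias}=\mathcal O(\eps_n^\mathrm{diff})$ and decompose
\[
(\eps_n^\mathrm{diff})^{-1}\bigl(\bar\theta_n-\theta^*+\eps_n^\mathrm{bias}H^{-1}\mu\bigr)=(\eps_n^\mathrm{diff})^{-1}(\bar\theta_n-\vartheta_n)+\frac{\eps_n^\mathrm{bias}}{\eps_n^\mathrm{diff}}\Bigl((\eps_n^\mathrm{bias})^{-1}(\vartheta_n-\theta^*)+H^{-1}\mu\Bigr).
\]
By~(ii) the bracketed term tends to $0$ in probability, and $\eps_n^\mathrm{bias}/\eps_n^\mathrm{diff}$ is bounded, so the second summand is $o_\P(1)$; Slutsky's lemma together with~(i) gives the claim. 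Part~(2) is the special case in which $\eps_n^\mathrm{bias}/\eps_n^\mathrm{diff}\to0$: writing $(\eps_n^\mathrm{diff})^{-1}(\bar\theta_n-\theta^*)=(\eps_n^\mathrm{diff})^{-1}(\bar\theta_n-\vartheta_n)+(\eps_n^\mathrm{bias}/\eps_n^\mathrm{diff})\,(\eps_n^\mathrm{bias})^{-1}(\vartheta_n-\theta^*)$ and using~(ii), the last summand vanishes in probability, and Slutsky applies.

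For~(3), assume $\eps_n^\mathrm{diff}=o(\eps_n^\mathrm{bias})$ and write
\[
(\eps_n^\mathrm{bias})^{-1}(\bar\theta_n-\theta^*)=\frac{\eps_n^\mathrm{diff}}{\eps_n^\mathrm{bias}}\,(\eps_n^\mathrm{diff})^{-1}(\bar\theta_n-\vartheta_n)+(\eps_n^\mathrm{bias})^{-1}(\vartheta_n-\theta^*).
\]
Here the first summand is the product of the null sequence $\eps_n^\mathrm{diff}/\eps_n^\mathrm{bias}$ with a sequence that converges in distribution, hence is $o_\P(1)$ by~(iii), while the second summand tends to $-H^{-1}\mu$ in probability by~(ii); the claim follows. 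The only point requiring a little care is that all convergences above are asserted only on $\{\theta_n\to\theta^*\}$, so Slutsky's lemma and the boundedness-in-probability statement must be read as statements about the corresponding sub-probability restrictions; since each of these is a routine consequence of the notions of stable/conditional convergence used in Theorem~\ref{theo_RP_main}, this is not a genuine obstacle, and no part of the argument touches the structure of $(\theta_n)$, $(R_n)$ or $(D_n)$ beyond what is already packaged there.
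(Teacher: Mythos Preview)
Your proof is correct and is exactly the natural Slutsky-type derivation from the two conclusions of Theorem~\ref{theo_RP_main}. The paper does not spell out a separate proof of Corollary~\ref{cor_1}, but the very same decomposition and $o_\P$ reasoning you use appears verbatim in the ``Synthesis'' step of Section~\ref{sec4} (proof of Theorem~\ref{theo:main_pol}), so your approach coincides with the paper's intended argument.
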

%
%

\begin{rem}[Discussion of the assumptions]
The crucial quantities  in the CLT (Theorem~\ref{theo_RP_main}) are $(\delta^{\mathrm{diff}}_n)$ and $(\delta^{\mathrm{bias}}_n)$. These control  the contribution of the martingale differences $(D_n)$ and the adapted term $(R_n)$ which is made explicit in (D) and (E). Beyond the latter assumptions we only impose very weak assumptions on these quantities in (C). The role of $(\delta_n)$ is to control a technical term appearing in our computations. It appears in assumptions (D) and (E). Further with the assumed regularity of $f$, i.e.\ assumption (B), the proof uses  that (F) implies negligibility of a certain linearisation error. Additionally, (A) imposes a technical assumption on $(\delta_n)$. In the context of Ruppert-Polyak averaging, one typically is in a setting where $(\delta_n)$ can be chosen such that $\frac1{\gamma_n}(1-\frac {\delta_n}{\delta_{n-1}})$ converges to zero. In that case the strength of the contractivity $L$ does not need to be known. 
The further assumptions imposed in (A) are very weak and essentially only exlclude the case where $(\gamma_n)$ is of order $(1/n)$.
\end{rem}

In the proof we follow the approach that was taken in Sacks~\cite{Sacks58} for the first time to prove distributional limit theorems for  stochastic approximation schemes. First, one derives an a priori bound for the order of convergence of $(\theta_n)$ to an $L$-contracting zero $\theta^*$. Second, one derives limit theorems for a related linearised system. Finally, one shows with the help of the a priori bound that  the difference between the  original  dynamical system and a linearised version is asymptotically negligible and that the results of the second part carry over to the Ruppert-Polyak average of the original stochastic approximation  scheme. 
For convenience of the reader the proof is fully self-contained. Our proofs are based on a classical  $L^2$-bound which is provided 
 in the appendix for the convenience of the reader, see Theorem~\ref{thm1}. The  linearised scheme (the second step) is analysed in Section~\ref{sec_RP_lin}. We stress that the respective results do not require regular variation assumptions as is standard in previous research. 
The proof of Theorem~\ref{theo_RP_main} will finally be carried out in Section~\ref{proof_main_SA}.



\subsection{Averaged multilevel stochastic approximation in the slow regime}\label{sec_1_2-1}

The introduction of multilevel  stochastic approximation algorithms is based on approximations $F_k:D\times \cU\to \R^d$ $(k\in\N)$ for $F$.

\begin{defi}\label{def_app1} For every $k\in \N$ let
\[
F_k\colon D\times \cU\to \R^d
\]
be a product measurable mapping such that for every $\theta\in D$, $F_k(\theta,U)$ is integrable.
Furthermore, let  $\theta^*\in D$, $\alpha,\beta>0$, $M>1$, $\mu\in\R^d$ and $\Gamma\in \R^{d\times d}$.
We say that the mappings $F_1,\dots$ are \emph{approximations for $F$ of order $(\alpha,\beta)$ with asymptotic bias $\mu$ and variance $\Gamma$ around $\theta^*$ on scale $M$} if the following holds: 
\begin{align}\label{as_bias1_1}\tag{BIAS}
\lim_{\theta\to\theta^*,k\to\infty} M^{\alpha k} (\E[F_k(\theta, U)]-f(\theta))= \mu,
\end{align}
\begin{align}\label{as_var1_1}\tag{VAR}
\lim_{\theta\to \theta^*, k\to \infty} M^{\beta k} \cov( F_k(\theta, U)-F_{k-1}(\theta,U))=\Gamma
\end{align}
and there 
exists $q>2$, a neighbourhood $D_{\theta^*}$ of $\theta^*$ and a constant $C_{\mathrm{TAIL}}$ such that for all $\theta\in D_{\theta^*}$
\begin{align}\label{as_tail_1}\tag{TAIL}
\E[\|F_{k}(\theta, U)-F_{k-1}(\theta,U)- \E[F_k(\theta, U)-F_{k-1}(\theta,U)]\|^{q}]^{2/q} \leq C_{\mathrm{TAIL}} \,  M^{-\beta k}.
\end{align}
\end{defi}

In the following we will associate a simulation of $F_k(\theta, U)-F_{k-1}(\theta,U)$ with a value $C_k(\theta)\in(0,\infty)$, the \emph{cost} of one simulation.
\begin{defi}

Let $\theta^*\in D$, $M>1$ and $\kappa_C>0$. We call a family $(C_k:k\in\N)$ of measurable functions $C_k:D\to (0,\infty)$ \emph{cost functions of scale $M$ with contant $\kappa_C$ around $\theta^*$}, if the following holds 
\begin{align}\label{as_cost_2}\tag{COST}
\lim_{\theta\to\theta^*,k\to\infty} M^{-k} C_k(\theta)= \kappa_C.
\end{align}
and there  exists a neighbourhood $D_{\theta^*}$ of $\theta^*$  and a constant $C_\mathrm{COST}$  such that for  $\theta\in D_{\theta^*}$ and $k\in\N$
\begin{align}\label{as_cost2_2}\tag{COST2}
C_k(\theta) \leq C_{\mathrm{COST}}\, M^k.
\end{align}
\end{defi}

\begin{algo}[Multilevel stochastic approximation I] \label{algo1} The definition depends on a family $F_1,\dots$ of approximations of $F$, the scale $M>0$ and on four further sequences:
\begin{itemize}\item a monotonically decreasing $(0,\infty)$-valued sequence  $(\gamma_n)_{n\in\N}$ (\emph{step-sizes})
\item a $(0,\infty)$-valued sequence $(K_n)_{n\in\N}$ (\emph{time budgets}), 
\item  a $\N$-valued sequence $(s_n)_{n\in\N}$ (\emph{accuracies}), 
\item a $(0,\infty)$-valued sequence $(b_n)_{n\in\N}$ (\emph{weights}).
\end{itemize}
For an $\cU$-valued sequence $\mathbbm u=(u_{k,l}:k,l=1,\dots)$,  $s\in\N$, $K>0$ and $\theta\in D$ we let
$$
Z(\mathbbm u;\theta, s,K)=\sum_{k=1}^{s} \frac 1{N_k(s,K)} \sum_{l=1}^{N_k(s,K)} (F_k(u_{k,l},\theta)- F_{k-1}(u_{k,l},\theta)),
$$
where  
$$
N_k(s,K)=  \Bigl\lceil \frac{K}{M^s} M^{\frac{\beta+1}2(s-k)}\Bigr\rceil.
$$
Let $\mathbb U=(U_{k,l})_{k,l=1,\dots}$ be a sequence of independent copies of $U$. 
For  given $\theta_0\in D$ we define   $(\theta_n:n\in\N_0)$ iteratively via 
$$
\theta_n=\Pi(\theta_{n-1} +\gamma_n Z(\mathbb U_n;\theta_{n-1}, s_n,K_n)),
$$
where  $(\mathbb U_n:n\in\N)$ is a sequence of independent copies of $\mathbb U$. Moreover, we let
$$
\bar \theta_n= \frac 1{\bar b_n}\sum_{k=1}^n b_k \theta_k
$$
and call $(\bar\theta_n)_{n\in\N_0}$  the \emph{output of the stochastic approximation algorithm with parameter tuple $((\gamma_n), (K_n),(s_n),(b_n))$}. If we are given measurable cost functionals
$$
C_k:D\to [0,\infty)
$$
we further associate the generation of $\bar\theta_n$ with the random cost
$$
\mathrm{cost}_n=\sum_{m=1}^n \sum_{k=1}^{s_m} N_k(s_m,K_m)\, C_k(\theta_{m-1}).
$$
\end{algo}

\begin{theorem}\label{theo:main_pol}
Let $\theta^*\in \mathrm{interiour}(D)$ be a  contracting zero of $f(\theta)=\E[F(\theta,U)]$ and suppose that there exist $H\in\R^{d\times d}$ and  $\lambda\in(0,1]$ with 
\begin{align}\label{as:dif}
|f(\theta)-H (\theta-\theta^*)|=\mathcal O(|\theta-\theta|^{1+\lambda}).
\end{align}
Let $M>1$, $\alpha,\beta>0$ with $\beta <2\alpha$ and $\beta<1$, and suppose that $F_1,\dots$ are approximations of $F$ of order $(\alpha,\beta)$ with asymptotic bias $\mu\in\R^d$ and variance $\Gamma\in\R^{d\times d}$ on scale $M>1$ around $\theta^*$.

Let $ \varphi, \rho$ and $\psi$ be reals with 
\begin{align}\label{eq:as_1}
\frac 1{2\alpha-\beta}<\varphi, \  \ \varphi+1< 2(\rho+1) \ \text{ and } \ \Bigl(1-\frac {2\lambda}{\lambda+1}(\varphi+1)\mathfrak r\Bigr)_+<\psi<1,
\end{align}
where 
$$\mathfrak r=\frac{\alpha}{2\alpha-\beta+1}.$$ 
In dependence on two further parameters $\kappa_K,\kappa_s>0$ we denote by $(\bar\theta_n)$ the averaged stochastic approximation scheme as in Algorithm~\ref{algo1} with parameters
$$
\gamma_n=n^{-\psi}, \ K_n=\kappa_K(\varphi+1)n^\varphi, \ {s_n}=\max\Bigl(\Bigl\lfloor \log_M  \kappa_s\, \bar K_n^{ \frac1{2\alpha-\beta+1}}\Bigr\rfloor,1\Bigr)\text{ \ and \ }b_n=n^\rho,
$$
where $\bar K_n=\sum_{k=1}^n K_k$.

We set 
$$
 \mathfrak r_1:=\frac {\rho+1}{\varphi+1}(2\alpha-\beta+1)\text{ \ and \ }\mathfrak r_2:=\bigl(2\frac{\rho+1}{\varphi+1}-1\bigr)(2\alpha-\beta+1),$$
let 
$$
\xi_n:=\log_M \kappa_s \bar K_n^{\frac 1{2\alpha-\beta+1}}- s_n
$$
and for $u,v\in\R$ with $u>0$ and $u-v>0$
$$\psi_{u,v}(z)=M^{-z(u+v)}\Bigl(\frac{M^u-1}{M^{u+v}-1} +M^{uz}-1\Bigr) \qquad (z\in[0,1]).
$$
Then one has on $\{\theta_n\to\theta^*\}$
$$
\frac 1{\eps_n^\mathrm{diff}} (\bar\theta_n-\theta^*+ \eps_n^\mathrm{bias}  H^{-1}\mu)\Rightarrow \mathcal N(0,H^{-1} \Gamma (H^{-1})^\dagger),
$$
where 
$$
\eps_n^\mathrm{bias}= \kappa_s^{-\alpha} \kappa_K^{-r}  \psi_{\mathfrak r_1,-\alpha}(\xi_n) \, n^{-(\varphi+1)\mathfrak r}
$$
and
$$
\eps_n^\mathrm{diff}=\frac{1}{\sqrt{1-M^{-\frac{1-\beta}2}}} \, \frac {\frac{\rho+1}{\varphi+1}}{\sqrt{2\frac{\rho+1}{\varphi+1}-1}}\, \kappa_s^{\frac{1-\beta}2}\kappa_K^{-\mathfrak r}\sqrt{\psi_{\mathfrak r_2,1-\beta}(\xi_n)}\,n^{-(\varphi+1)\mathfrak r}.
$$
Supposing further that $C_1,\dots$ are cost functions of scale $M$ with constant $\kappa_C>0$ around $\theta^*$ we get
in terms of $\mathrm{cost}_n$  on $\{\theta_n\to\theta^*\}$
$$
\eps_n^\mathrm{bias}\sim \kappa_C^{\mathfrak r} (1-M^{-\frac{1-\beta}2})^{-\mathfrak r}  \kappa_s^{-\alpha}  \psi_{\mathfrak r_1,-\alpha}(\xi_n)\,\mathrm{cost}_n^{-\mathfrak r}
$$
and
$$
\eps_n^\mathrm{diff}\sim \kappa_C^{\mathfrak r} (1-M^{-\frac{1-\beta}2})^{-(\mathfrak r+\frac 12)} \frac {\frac{\rho+1}{\varphi+1}}{\sqrt{2\frac{\rho+1}{\varphi+1}-1}}  \,\kappa_s^{\frac{1-\beta}2}\sqrt{\psi_{\mathfrak r_2,1-\beta}(\xi_n)}\,\mathrm{cost}_n^{-\mathfrak r}.
$$
\end{theorem}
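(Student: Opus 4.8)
The plan is to deduce Theorem~\ref{theo:main_pol} from the abstract central limit theorem, Theorem~\ref{theo_RP_main}, in the form of Corollary~\ref{cor_1}(1). First I recast Algorithm~\ref{algo1} as an instance of the dynamical system~(\ref{dynsys2}): with $\cF_n=\sigma(\mathbb U_1,\dots,\mathbb U_n)$, using that $\mathbb U_n$ is independent of $\cF_{n-1}$ together with the telescoping structure of $Z$ (with $F_0\equiv0$), the increment splits as $\gamma_n Z(\mathbb U_n;\theta_{n-1},s_n,K_n)=\gamma_n\bigl(f(\theta_{n-1})+R_n+D_n\bigr)$ with adapted bias $R_n=\E[F_{s_n}(\theta,U)]\big|_{\theta=\theta_{n-1}}-f(\theta_{n-1})$ and martingale difference $D_n=Z(\mathbb U_n;\theta_{n-1},s_n,K_n)-\E[Z(\mathbb U_n;\theta_{n-1},s_n,K_n)\mid\cF_{n-1}]$. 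One then sets
\[
\delta_n^\mathrm{bias}=M^{-\alpha s_n},\qquad (\delta_n^\mathrm{diff})^2=\frac{M^{(1-\beta)s_n}}{K_n\,(1-M^{-\frac{1-\beta}2})},
\]
takes for $\delta_n$ a smooth polynomial majorant $\delta_n=c\,n^{-a}$ of both $\delta_n^\mathrm{bias}$ and $\sqrt{\gamma_n}\,\delta_n^\mathrm{diff}$ with the largest admissible rate $a=\min\{(\varphi+1)\mathfrak r,\ (\psi-\eta)/2\}$, where $\eta$ is the real with $(\delta_n^\mathrm{diff})^2\asymp n^\eta$, and picks $L\in(0,-\max\{\mathrm{Re}\,\lambda:\lambda\text{ eigenvalue of }H\})$, so that $\theta^*$ is an $L$-contracting zero with $L>0$.

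The bulk of the work is then to verify assumptions (A)--(F) of Theorem~\ref{theo_RP_main}. Assumption (A) is immediate from $\gamma_n=n^{-\psi}$, $b_n=n^\rho$: $\psi<1$ gives $n\gamma_n\to\infty$, $a>0$ (a consequence of the lower bound on $\psi$) gives $\gamma_n^{-1}(1-\delta_n/\delta_{n-1})\to0\le L$, and the remaining ratios are elementary. Assumption (B) is the hypothesis~(\ref{as:dif}). For (D), the first part is (BIAS) applied along $\theta_{n-1}\to\theta^*$, $s_n\to\infty$, and the moment bound is the locally uniform version of (BIAS) together with $\delta_n^\mathrm{bias}=\cO(\delta_n)$. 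For (E), one expands $\cov(D_n\mid\cF_{n-1})=\sum_{k=1}^{s_n}N_k(s_n,K_n)^{-1}\cov\bigl(F_k(\theta_{n-1},U)-F_{k-1}(\theta_{n-1},U)\bigr)$ using independence across levels, notes that $K_n/M^{s_n}\to\infty$ (here the lower bound $\varphi>\frac1{2\alpha-\beta}$ enters), so that the ceilings in $N_k$ are asymptotically negligible and, by (VAR), the geometric sum over $k$ is dominated by the top levels, whence $(\delta_n^\mathrm{diff})^{-2}\cov(D_n\mid\cF_{n-1})\to\Gamma$; the Lindeberg condition follows from (TAIL) via $\E[\|D_n\|^q\mid\cF_{n-1}]^{2/q}\lesssim(\delta_n^\mathrm{diff})^2$ (a Rosenthal/Marcinkiewicz bound for the inner averages plus the triangle inequality over levels) and the fact that for $q>2$ the $q$-th power sums of $(b_k\delta_k^\mathrm{diff})$ are of strictly smaller order than the $q/2$-th power of their square sums; the final bound in (E) uses $\sqrt{\gamma_n}\,\delta_n^\mathrm{diff}=\cO(\delta_n)$. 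Assumption (C) and the order conditions in (D)--(F) rest on the partial-sum asymptotics discussed next: since these sums grow like a positive power of $n$ (the assumptions $\varphi+1<2(\rho+1)$ and $\beta<1$ make the relevant exponents $\rho+1-(\varphi+1)\mathfrak r$ and $2\rho-\varphi+1$ positive), the summability and tail-negligibility in (C) hold, and (F) reduces to the inequality $(1+\lambda)a>(\varphi+1)\mathfrak r$, which is automatic when $a=(\varphi+1)\mathfrak r$ and, in the case $a=(\psi-\eta)/2$, is (after a short algebraic identity) precisely equivalent to the stated bound $\psi>(1-\frac{2\lambda}{\lambda+1}(\varphi+1)\mathfrak r)_+$.

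The computational core, which I expect to be the main obstacle, is the exact evaluation of $\eps_n^\mathrm{bias}=\bar b_n^{-1}\sum_{k=1}^n b_kM^{-\alpha s_k}$ and $\eps_n^\mathrm{diff}=\bar b_n^{-1}\bigl(\sum_{k=1}^n b_k^2M^{(1-\beta)s_k}K_k^{-1}(1-M^{-\frac{1-\beta}2})^{-1}\bigr)^{1/2}$: because $s_k=\lfloor\log_M\kappa_s\bar K_k^{1/(2\alpha-\beta+1)}\rfloor$ is a step function (and $\xi_n$ its fractional offset), these sums have no clean power-law asymptotics but behave like a power of $n$ times a bounded oscillatory function of $\xi_n$. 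I would write $\bar K_k\sim\kappa_K k^{\varphi+1}$ and $M^{-\alpha s_k}=\kappa_s^{-\alpha}\bar K_k^{-\mathfrak r}M^{\alpha\xi_k}$, split the sum over the blocks on which $s_k$ is constant (whose endpoints $n_j$ grow geometrically with ratio $M^{(2\alpha-\beta+1)/(\varphi+1)}$), approximate each block sum by an integral in the variable $\xi$, use the algebraic simplification $M^{\xi(\mathfrak r_1-\alpha)}M^{\alpha\xi}=M^{\xi\mathfrak r_1}$ (and its analogue with $\mathfrak r_2$, $1-\beta$), sum the resulting geometric series over the full blocks, add the partial last block, and substitute $n_J=n\,M^{-\xi_n(2\alpha-\beta+1)/(\varphi+1)}$; collecting constants reproduces exactly $\psi_{\mathfrak r_1,-\alpha}(\xi_n)\,n^{-(\varphi+1)\mathfrak r}$, respectively $\sqrt{\psi_{\mathfrak r_2,1-\beta}(\xi_n)}\,n^{-(\varphi+1)\mathfrak r}$, with the stated prefactors. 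One checks that $\psi_{\mathfrak r_1,-\alpha}$ and $\psi_{\mathfrak r_2,1-\beta}$ are continuous and strictly positive on $[0,1]$ (using $\mathfrak r_1>\alpha$, again via $\beta<1$, and $\mathfrak r_2>0$), so $\eps_n^\mathrm{bias}=\cO(\eps_n^\mathrm{diff})$ and Corollary~\ref{cor_1}(1) applies; since replacing $\eps_n^\mathrm{bias},\eps_n^\mathrm{diff}$ by asymptotically equivalent sequences does not affect the conclusion, this yields the first displayed convergence. Finally, on $\{\theta_n\to\theta^*\}$ each increment of $\mathrm{cost}_n$ satisfies $\sum_{k=1}^{s_m}N_k(s_m,K_m)C_k(\theta_{m-1})\sim\kappa_C K_m/(1-M^{-\frac{1-\beta}2})$ by (COST) (the $k$-dependence of the geometric sum cancelling since $\frac{\beta+1}2+\frac{1-\beta}2=1$, with (COST2) absorbing the finitely many low levels), hence $\mathrm{cost}_n\sim\kappa_C\kappa_K n^{\varphi+1}/(1-M^{-\frac{1-\beta}2})$, and substituting $n^{-(\varphi+1)\mathfrak r}\sim(\kappa_C\kappa_K)^{\mathfrak r}(1-M^{-\frac{1-\beta}2})^{-\mathfrak r}\mathrm{cost}_n^{-\mathfrak r}$ into the formulas for $\eps_n^\mathrm{bias}$ and $\eps_n^\mathrm{diff}$ gives the remaining two asymptotic equivalences.
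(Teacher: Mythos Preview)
Your approach is essentially the one taken in the paper: cast Algorithm~\ref{algo1} as an instance of~(\ref{dynsys2}), verify (A)--(F) of Theorem~\ref{theo_RP_main} with $\delta_n^\mathrm{bias}=M^{-\alpha s_n}$, $\delta_n^\mathrm{diff}\asymp M^{(1-\beta)s_n/2}K_n^{-1/2}$ and $\delta_n=n^{-(\varphi+1)\mathfrak r+(1-\psi)/2}$ (your $\min$ always equals its second argument since $\psi<1$), evaluate the partial sums by decomposing into the blocks on which $s_k$ is constant, and extract the cost asymptotics. The paper applies Theorem~\ref{theo_RP_main} directly and then absorbs the constant in a short synthesis step, whereas you normalise $\delta_n^\mathrm{diff}$ so that Corollary~\ref{cor_1}(1) applies with the target $\Gamma$ immediately; this is a cosmetic difference only.

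One technical point you should patch: without further assumptions, $F_k(\theta,U)$ is only known to be integrable for arbitrary $\theta\in D$, and (TAIL) is only available on the neighbourhood $D_{\theta^*}$, so your $D_n$ need not be square-integrable (nor $\cov(D_n\mid\cF_{n-1})$ well-defined) when $\theta_{n-1}\notin D_{\theta^*}$. The paper handles this by setting $D_n=0$ and pushing the full increment into $R_n$ on $\{\theta_{n-1}\notin D_{\theta^*}\}$; on $\{\theta_n\to\theta^*\}$ this affects only finitely many steps and leaves the verification of (D) and (E) unchanged, but it is needed for the decomposition to fit the hypotheses of Theorem~\ref{theo_RP_main}. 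A second, smaller point: in (F) you implicitly assume $\rho-(1+\lambda)a>-1$; the paper also checks the complementary case, in which $\bar b_n^{-1}\sum b_l\delta_l^{1+\lambda}=\mathcal O(\bar b_n^{-1}\log n)$ is $o(\eps_n^\mathrm{bias})$ directly from $\rho+1>(\varphi+1)\mathfrak r$.
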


The proof is carried out in Section~\ref{sec4}.

\subsection{Averaged multilevel stochastic apparoximation in the critical regime}\label{sec_1_3-1}


The critical regime is the setting  where the  approximations are of order $(\alpha,1)$. It is the most prominent regime since it comprises (under appropriate assumptions) the case where $f$ is given as expectation of a payoff function applied to the final value of a stochastic differential equation and we mention \cite{Gil08,Fri15} as classical references for the multilevel treatment of the respective case.  In that setting we  derive a central limit theorem under weaker assumptions than in the slow regime and we replace Definition~\ref{def_app1} by the following one.

\begin{defi} For every $k\in\N$ let 
$$F_k:D\times \cU\to \R^d
$$
be a product measurable mapping such that for every $\theta\in D$, $F_k(\theta,U)$ is integrable. Furthermore, let $\theta^*\in D$, $\alpha>0$, $M>1$ and $\Gamma\in \R^{d\times d}$.
We say that the mappings $F_1,\dots$ are \emph{approximations for $F$ of order $(\alpha,1)$ with asymptotic variance $\Gamma$ around $\theta^*$ on scale $M$} if the following holds:  there exists a sequence $(\alpha_k)_{k\in\N}$ tending to $\alpha$ such that 
\begin{align}\label{as_bias1_1-2}\tag{BIAS$^*$}
\limsup_{\theta\to\theta^*,k\to\infty} M^{\alpha_k k} |\E[F_k(\theta, U)]-f(\theta)|< \infty,
\end{align}
\begin{align}\label{as_var1_1-2}\tag{VAR}
\lim_{\theta\to \theta^*, k\to \infty} M^{\beta k} \cov( F_k(\theta, U)-F_{k-1}(\theta,U))=\Gamma
\end{align}
and there 
exists a neighbourhood $D_{\theta^*}$ of $\theta$ and a constant $C_{\mathrm{TAIL}}$ such that for all $\theta\in D_{\theta^*}$
\begin{align}\label{as_tail_1-1}\tag{TAIL}
\E[\|F_{k}(\theta, U)-F_{k-1}(\theta,U)- \E[F_k(\theta, U)-F_{k-1}(\theta,U)]\|^{q}]^{2/q} \leq C_{\mathrm{TAIL}} \,  M^{-\beta k}.
\end{align}
\end{defi}
\begin{rem}\label{rem_asalt} Obviously,  assumption~(\ref{as_bias1_1-2}) is weaker than assumption~(\ref{as_bias1_1}). 
\end{rem}

We now consider the case $\beta=1$. Note that in this case
$$
N_k(s,K)=  \bigl\lceil K M^{-k}\bigr\rceil
$$
so that the parameter $s$ has no influence on the iteration numbers and we write $N_k(K)=N_k(s,K)$ in the following.

\begin{theorem}\label{theo:main_pol_2}
Let $\theta^*\in \mathrm{interiour}(D)$ be a  contracting zero of $f(\theta)=\E[F(\theta,U)]$ and suppose that there exist $H\in\R^{d\times d}$ and  $\lambda\in(0,1]$ with 
\begin{align}\label{as:dif}
|f(\theta)-H (\theta-\theta^*)|=\mathcal O(|\theta-\theta|^{1+\lambda}).
\end{align}
Let $M>1$, $\alpha>1/2$  and suppose that $F_1,\dots$ are approximations of $F$ of order $(\alpha,1)$ with asymptotic  variance $\Gamma\in\R^{d\times d}\backslash \{0\}$ on scale $M>1$ around $\theta^*$.

Let $ \varphi, \rho$ and $\psi$ be reals with 
\begin{align}\label{eq:as_1-2}
\frac 1{2\alpha-1}<\varphi, \  \  \varphi+1< 2(\rho+1) \ \text{ and } \ \Bigl(1-\frac {\lambda}{\lambda+1}(\varphi+1)\Bigr)_+<\psi<1,
\end{align}
let $(\alpha_k)_{k\in\N}$ be as in~(\ref{as_bias1_1-2}) and choose an increasing $\N$-valued  sequence $(s_n)_{n\in\N}$ with
\begin{align}\label{as_634}
M^{-\alpha_{s_n} s_n}=o (n^{-\frac{\varphi+1}2}\sqrt{\log n}) \text{ \ and \ }
 \alpha s_n\sim \log_M n^{\frac{\varphi+1}2}.
\end{align}
In dependence on two further parameters $\kappa_K,\kappa_s>0$ we denote by $(\bar\theta_n)_{n\in\N_0}$ the averaged stochastic approximation scheme as in Algorithm~\ref{algo1} with parameters
$$
\gamma_n=n^{-\psi}, \ K_n=\kappa_K(\varphi+1)n^\varphi, \ {s_n}\text{ \ and \ }b_n=n^\rho.
$$
Then one has on $\{\theta_n\to\theta^*\}$
$$
\frac 1{\eps_n^\mathrm{diff}} (\bar\theta_n-\theta^*)\Rightarrow \mathcal N(0,H^{-1} \Gamma (H^{-1})^\dagger),
$$
where 
$$ 
\eps_n^\mathrm{diff}=\frac 1{\sqrt{2\alpha\kappa_K}} \frac {\frac{\rho+1}{\varphi+1}}{\sqrt{2\frac {\rho+1}{\varphi+1}-1}} n^{-\frac{\varphi+1}2} \sqrt{\log_Mn^{\varphi+1}}.
$$
Supposing further that $C_1,\dots$ are cost functions of scale $M$ with constant $\kappa_C>0$ around $\theta^*$ we get
in terms of $\mathrm{cost}_n$  on $\{\theta_n\to\theta^*\}$
$$
\eps_n^\mathrm{diff}\sim \frac{\sqrt{\kappa_C}}{2\alpha} \frac {\frac{\rho+1}{\varphi+1}}{\sqrt{2\frac{\rho+1}{\varphi+1}-1}}  \,\frac{\log_M \mathrm{cost}_n}{\sqrt{\mathrm{cost}_n}} . 
$$
\end{theorem}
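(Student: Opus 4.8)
The plan is to recognise the recursion of Algorithm~\ref{algo1} as an instance of the abstract system~\eqref{dynsys2} and to deduce the assertion from Corollary~\ref{cor_1}(2). With $\cF_n=\sigma(\mathbb U_1,\dots,\mathbb U_n)$ the vector $Z(\mathbb U_n;\theta_{n-1},s_n,K_n)$ is, conditionally on $\cF_{n-1}$, an average of independent telescoping increments with conditional mean $\E[F_{s_n}(\theta_{n-1},U)]$ (convention $F_0\equiv0$); setting
$$
R_n=\E[F_{s_n}(\theta_{n-1},U)]-f(\theta_{n-1}),\qquad D_n=Z(\mathbb U_n;\theta_{n-1},s_n,K_n)-\E[F_{s_n}(\theta_{n-1},U)]
$$
we obtain $\theta_n=\Pi(\theta_{n-1}+\gamma_n(f(\theta_{n-1})+R_n+D_n))$ with $(R_n)$ adapted and $(D_n)$ a martingale difference sequence. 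Since $\theta^*$ is a contracting zero, $H$ has all eigenvalues with strictly negative real part, hence $\theta^*$ is $L$-contracting for some $L>0$, and (B) of Theorem~\ref{theo_RP_main} is the assumed Taylor bound.

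Next I would fix the auxiliary sequences. Because $\mathbb U_n$ is independent of $\cF_{n-1}$ and the inner sums average $N_k(K_n)=\lceil K_nM^{-k}\rceil$ i.i.d.\ copies, $\cov(D_n\mid\cF_{n-1})=\sum_{k=1}^{s_n}N_k(K_n)^{-1}\cov\bigl(F_k(\theta_{n-1},U)-F_{k-1}(\theta_{n-1},U)\bigr)$. Here $\varphi>1/(2\alpha-1)$ together with $\alpha s_n\sim\log_Mn^{(\varphi+1)/2}$ forces $\log_MK_n-s_n\to\infty$, so that $K_nM^{-k}\to\infty$ uniformly over $k\le s_n$ and the ceilings are asymptotically negligible. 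Setting $\delta_n^\mathrm{diff}=\sqrt{s_n/K_n}$, the variance limit (used over the bulk of the $\approx s_n$ levels) then yields $(\delta_n^\mathrm{diff})^{-2}\cov(D_n\mid\cF_{n-1})\to\Gamma$ on $\{\theta_n\to\theta^*\}$, while the tail bound yields $\E[|D_n|^2\mid\cF_{n-1}]=\mathcal O(s_n/K_n)$ uniformly on a neighbourhood $D_{\theta^*}$ of $\theta^*$; taking moreover $\delta_n=\sqrt{\gamma_n}\,\delta_n^\mathrm{diff}$ (so that $\delta_n/\sqrt{\gamma_n}=\delta_n^\mathrm{diff}$) disposes of (E) apart from the Lindeberg condition, while $\gamma_n=n^{-\psi}$, $b_n=n^\rho$, $\psi<1$ give all of (A) — in particular $\frac1{\gamma_n}(1-\delta_n/\delta_{n-1})$ equals $\mathcal O(n^{\psi-1})\to0$ at the indices where $s_n=s_{n-1}$ and is negative at the others, so its $\limsup$ is $\le0\le L$ and $L$ need not be known. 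For the bias, (BIAS$^*$) and~\eqref{as_634} give $|R_n|\le a_n:=\sup_{\theta\in D_{\theta^*}}|\E[F_{s_n}(\theta,U)]-f(\theta)|=o(n^{-(\varphi+1)/2}\sqrt{\log n})$, and I would take $\delta_n^\mathrm{bias}=\max\bigl(\sqrt{a_n\,n^{-(\varphi+1)/2}\sqrt{\log n}},\,n^{-(\varphi+1)/2}\bigr)$; this dominates $a_n$ strictly, so $(\delta_n^\mathrm{bias})^{-1}R_n\to0$ and $\delta_n^{-1}a_n\to0$ (both clauses of (D), with vanishing effective bias), it stays $o(n^{-(\varphi+1)/2}\sqrt{\log n})$, and $b_k\delta_k^\mathrm{bias}\ge k^{\rho-(\varphi+1)/2}$ with $\rho-(\varphi+1)/2>-1$ by~\eqref{eq:as_1-2} forces $\sum_kb_k\delta_k^\mathrm{bias}=\infty$. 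As all three sequences are powers times logarithmic factors, the divergence and tail-negligibility clauses of (C) reduce to elementary estimates of $\sum_{k\le n}k^p(\log k)^s$; and since $\eps_n^\mathrm{diff}$ is of order $n^{-(\varphi+1)/2}\sqrt{\log n}$ while $\eps_n^\mathrm{bias}=o(n^{-(\varphi+1)/2}\sqrt{\log n})$, we obtain $\eps_n^\mathrm{bias}=o(\eps_n^\mathrm{diff})$.

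The two remaining hypotheses are (F) and the Lindeberg condition in (E). For (F) one checks that $\frac1{\bar b_n}\sum_{l\le n}b_l\delta_l^{1+\lambda}$ is, up to logarithmic factors, of order $n^{-\frac{1+\lambda}2(\psi+\varphi)}$ or $n^{-(\rho+1)}$, and the hypothesis $\psi>(1-\tfrac{\lambda}{\lambda+1}(\varphi+1))_+$ — which is precisely $(1+\lambda)\psi>1-\lambda\varphi$ — together with $\varphi+1<2(\rho+1)$ makes this $o(n^{-(\varphi+1)/2}\sqrt{\log n})=o(\eps_n^\mathrm{diff})$. The Lindeberg condition is the technical core: I would first establish, by a Rosenthal/Burkholder inequality applied to $D_n=\sum_kN_k(K_n)^{-1}\sum_l(\cdots)$ and the tail bound, the conditional moment estimate $\E[|D_n|^q\mid\cF_{n-1}]=\mathcal O\bigl((s_n/K_n)^{q/2}\bigr)=\mathcal O\bigl((\delta_n^\mathrm{diff})^q\bigr)$ on $D_{\theta^*}$; then — since on $\{\theta_n\to\theta^*\}$ the iterates eventually lie in $D_{\theta^*}$ — the truncation $\E[\1_{\{|D_l|>t\}}|D_l|^2\mid\cF_{l-1}]\le t^{2-q}\E[|D_l|^q\mid\cF_{l-1}]$ dominates the Lindeberg sum by the Lyapunov ratio $\sum_{l\le n}(b_l\delta_l^\mathrm{diff})^q\big/\bigl(\sum_{l\le n}(b_l\delta_l^\mathrm{diff})^2\bigr)^{q/2}$, which tends to $0$ because $q>2$ (the logarithmic factors cancel). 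This conditional $q$-th moment bound for $D_n$ is the step I expect to be the main obstacle; the rest is bookkeeping.

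Having verified (A)--(F), Corollary~\ref{cor_1}(2) gives $(\eps_n^\mathrm{diff})^{-1}(\bar\theta_n-\theta^*)\Rightarrow\mathcal N(0,H^{-1}\Gamma(H^{-1})^\dagger)$ on $\{\theta_n\to\theta^*\}$, and it remains to identify the constants. From $\alpha s_k\sim\log_Mk^{(\varphi+1)/2}$ and $K_k=\kappa_K(\varphi+1)k^\varphi$ one has $(b_k\delta_k^\mathrm{diff})^2\sim k^{2\rho-\varphi}\log_Mk/(2\alpha\kappa_K)$; summing (legitimate since $2\rho-\varphi+1>0$) and dividing by $\bar b_n\sim n^{\rho+1}/(\rho+1)$, and using $2\rho-\varphi+1=(\varphi+1)(2\tfrac{\rho+1}{\varphi+1}-1)$ and $(\varphi+1)\log_Mn=\log_Mn^{\varphi+1}$, one recovers the stated $\eps_n^\mathrm{diff}$ up to asymptotic equivalence, which suffices inside the limit. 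Finally the cost limit, the cost growth bound, and again $\log_MK_m-s_m\to\infty$ give $\sum_{k\le s_m}N_k(K_m)C_k(\theta_{m-1})\sim\kappa_CK_ms_m$ on $\{\theta_n\to\theta^*\}$, hence $\mathrm{cost}_n\sim\frac{\kappa_C\kappa_K}{2\alpha}\,n^{\varphi+1}\log_Mn^{\varphi+1}$; taking logarithms shows $\log_M\mathrm{cost}_n\sim\log_Mn^{\varphi+1}$, so $n^{\varphi+1}\sim\frac{2\alpha}{\kappa_C\kappa_K}\,\mathrm{cost}_n/\log_M\mathrm{cost}_n$, and inserting this into the formula for $\eps_n^\mathrm{diff}$ (using $\frac1{\sqrt{2\alpha\kappa_K}}\sqrt{\kappa_C\kappa_K/(2\alpha)}=\sqrt{\kappa_C}/(2\alpha)$) produces the claimed cost-indexed asymptotics.
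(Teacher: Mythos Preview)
Your overall strategy coincides with the paper's: recast Algorithm~\ref{algo1} as~\eqref{dynsys2}, pick $\delta_n^\mathrm{diff}\sim\sqrt{s_n/K_n}$ and $\delta_n=\sqrt{\gamma_n}\,\delta_n^\mathrm{diff}$, verify (A)--(F), and read off the constants and the cost asymptotics. The Lindeberg step via a conditional $q$-th moment bound and Burkholder--Davis--Gundy, the treatment of~(F), and the cost computation all match the paper's argument.

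There is, however, a genuine gap in your handling of the bias sequence. You set $\delta_n^\mathrm{bias}=\max\bigl(\sqrt{a_n\,n^{-(\varphi+1)/2}\sqrt{\log n}},\,n^{-(\varphi+1)/2}\bigr)$ and then claim that the tail-negligibility clause of~(C) ``reduces to elementary estimates of $\sum_{k\le n}k^p(\log k)^s$''. But your $\delta_n^\mathrm{bias}$ is \emph{not} of this form: it involves $a_n$, about which you know only $a_n=o(n^{-(\varphi+1)/2}\sqrt{\log n})$, with no further regularity. Consequently $b_k\delta_k^\mathrm{bias}$ can sit anywhere between $k^{\rho-(\varphi+1)/2}$ and $o\bigl(k^{\rho-(\varphi+1)/2}\sqrt{\log k}\bigr)$, and bounding the tail sum from above and the full sum from below leaves a residual factor of order $\sqrt{\log n}$ that an adversarial choice such as $n-L(n)=n/(\log n)^{1/4}$ does not kill. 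So~(C) is not verified for your $\delta_n^\mathrm{bias}$.

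The paper sidesteps this entirely by taking the simpler $\delta_n^\mathrm{bias}=n^{-(\varphi+1)/2}\sqrt{\log n}$, a clean power--log sequence for which~(C) is immediate. One then has $(\delta_n^\mathrm{bias})^{-1}R_n\to0$ (so $\mu=0$) and $\epsilon_n^\mathrm{bias}\approx\epsilon_n^\mathrm{diff}$ rather than $o(\epsilon_n^\mathrm{diff})$; since $\mu=0$, Theorem~\ref{theo_RP_main} (or Corollary~\ref{cor_1}(1) with vanishing shift) already yields the centred CLT. Your detour --- engineering $\epsilon_n^\mathrm{bias}=o(\epsilon_n^\mathrm{diff})$ so as to invoke Corollary~\ref{cor_1}(2) --- is unnecessary and is precisely what creates the difficulty.
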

The proof is achieved in Section \ref{sec5}.

\begin{rem}[Optimal choice of parameters]
In the theorem only the choice of $\frac{\rho+1}{\varphi+1}$ affects the asymptotic efficiency. By assumption this value needs to be strictly bigger than $\frac 12$ and elementary analysis implies that the optimal choice is the value one. Consequently, we see the optimal speed of convergence if we choose $\varphi$ and $\psi$ according to~(\ref{eq:as_1-2}) and set $\rho=\varphi$ which is always an allowed choice.
\end{rem}
\begin{rem}[Availability of feasible $s_n$] In the theorem it is left open whether   sequences $(s_n)$ with property~(\ref{as_634}) do exist. This is indeed a consequence of the other assumptions. We set for $\alpha'\in(0,\alpha)$
$$
s_n^{(\alpha')}=\Bigl\lceil \frac 1{\alpha'} \log_M n^{\frac{\varphi+1}2}\Bigr\rceil.
$$
Note that for sufficiently large $n\in\N$, $\alpha_{s_n^{(\alpha')}}>\alpha'$ and for these $n$
$$
M^{-\alpha_{s_n^{(\alpha')}}  s_n^{(\alpha')}}\leq n^{-\frac{\varphi+1}2} .
$$
By a diagonalisation argument we can choose $\alpha'_n\to\alpha$ such that for $s_n=s_n^{(\alpha'_n)}$
$$
M^{-\alpha_{s_n}  s_n}\leq n^{-\frac{\varphi+1}2}=o\bigl(n^{-\frac{\varphi+1}2}\sqrt{\log n}\bigr).
$$
By construction one also has
$$
s_n=s_n^{(\alpha_n')}\sim \frac 1{\alpha'_n} \log_M n^{\frac{\varphi+1}2}\sim \frac 1\alpha \log_Mn^{\frac{\varphi+1}2}.
$$
\end{rem}

\section{The Ruppert-Polyak system for linear systems}\label{sec_RP_lin}

In this section, we treat the particular case where $f$ is a linear function.
In the following $H$ denotes a fixed contracting ${d\times d}$-matrix. We denote by $(\Upsilon_n)_{n\in\N}$ a sequence  of $\R^d$-valued random variables and let for $n\in\N$
\begin{align}\begin{split}\label{eq:lin}
\theta_n&= \theta_{n-1} +\gamma_n (H \theta_{n-1}+ \Upsilon_n)\\
\bar \theta_n&= \frac 1{\bar b_{n}} \bigl(\bar b_{n-1}   \bar \theta_{n-1}+ b_{n} \theta_n\bigr)
\end{split}\end{align}
where $(\gamma_n)_{n\in\N}$ and $(b_n)_{n\in\N}$ are sequences of strictly positive reals with the former sequence being monotonically decreasing and with  $\bar b_n=\sum_{k=1}^n b_k$. Our analysis will rely on the following assumptions:
\begin{enumerate}
\item[(L1)] $\exists L>0,\eps_0\in\N$  $\forall \eps\in[0,\eps_0]:$ $\|\1+\eps H\|\leq 1-\eps L$, 
\item[(L2)] $n\gamma_n\to\infty$,
\item[(L3)]  $\displaystyle{\frac{\gamma_{n}}{ \gamma_{n+1}} =\mathcal O(1) \text{ \  and \ } \frac{b_{n+1}\gamma_n}{b_n\gamma_{n+1}}= 1+o(\gamma_n).}$ 
\end{enumerate}

\begin{theorem}\label{theo:lin_main}Assume that properties (L1), (L2) and (L3) hold.
\begin{enumerate}\item[{[}I{]}] Suppose that  $(\delta_n)_{n\in\N}$ is a sequence of strictly positive reals such that up to nullsets, on $\{\theta_n\to0\}$,
$$
\delta_n^{-1} \Upsilon_n \to \mu
$$
and that for every $(L(n))_{n\in\N}$ with $L(n)\leq n$ and $n-L(n)=o(n)$ one has
\begin{align}\label{eq7845}
\sum_{k=1}^\infty b_k \delta_k=\infty\text{ \ and \ }
\lim_{n\to\infty}\frac{\sum_{k=L(n)+1}^n b_k \delta_k}{\sum_{k=1}^n b_k \delta_k}= 0.
\end{align}
Then up to nullsets on $\{\theta_n\to0\}$ one has 
$$
\lim_{n\to\infty} \Bigl(\frac  1{\bar b_n}\sum_{k=1}^n b_k \delta_k\Bigr)^{-1} \bar\theta_n  = -H^{-1}\mu.
$$
\item[{[}II{]}]
Suppose that $(\Upsilon_n)_{n\in\N}$ is a sequence of square integrable martingale differences  and that  $(\delta_n)_{n\in\N}$ is a sequence of strictly positive reals  such that up to nullsets, on $\{\theta_n\to0\}$,
$$
\lim_{n\to\infty} \frac 1{\delta^2_n} \cov(\Upsilon_n|\cF_{n-1})=\Gamma.
$$
Suppose further that for 
$$
\sigma_n=\frac 1{\bar b_n}\sqrt {\sum_{l=1}^n (b_l\delta_l)^2}
$$
and all $\eps>0$, on $\{\theta_n\to0\}$
$$ \lim_{n\to\infty} \sigma_n^{-2} \sum_{l=1}^n \frac {b_l^2}{\bar b_n^2} \E \bigl[\1_{\{\|\Upsilon_l\|>\frac{\eps\bar b_n \sigma_n}{b_l}\}}  \|\Upsilon_l\|^2\big| \cF_{l-1}\bigr]=0, \text{ in probability}.
$$
Additionally, we assume that for all sequences $(L(n))_{n\in\N}$ with $L(n)\leq n$ and $n-L(n)=o(n)$ one has
\begin{align}\label{eq974578}
\sum_{k=1}^\infty (b_k \delta_k)^2=\infty\text{ \ and \ }
\lim_{n\to\infty}\frac{\sum_{k=L(n)+1}^n (b_k \delta_k)^2}{\sum_{k=1}^n (b_k \delta_k)^2}= 0,
\end{align}
then it follows that, on $\{\theta_n\to0\}$,
$$
\sigma_n ^{-1} \bar \theta_n \Rightarrow \mathcal N(0, H^{-1}\Gamma (H^{-1})^\dagger).
$$
\item[{[}III{]}] Suppose that $(\eps_n)_{n\in\N}$ is a sequence of strictly positive reals such that there exists $n_0\in\N$ with
$$
\lim_{n\to\infty} \eps_n^{-1}  \bar b_n^{-1} \sum_{l=n_0+1}^n b_l \E[ \|\Upsilon_n\|] = 0.
$$
Then 
$$
\lim_{n\to\infty} \eps_n^{-1} \bar\theta_n  = 0,\text{ in probability}.
$$
\end{enumerate}
\end{theorem}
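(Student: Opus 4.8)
All three parts follow from a single algebraic identity obtained by summing the linear recursion~(\ref{eq:lin}) against the weights $(b_k)$ and summing by parts. Since $(\theta_k)$ solves~(\ref{eq:lin}) one has $H\theta_{k-1}=\gamma_k^{-1}(\theta_k-\theta_{k-1})-\Upsilon_k$; multiplying by $b_k$, summing over $k\le n$, using $\theta_{k-1}=\theta_k-\gamma_k(H\theta_{k-1}+\Upsilon_k)$ to pass from $\sum_k b_k\theta_{k-1}$ to $\bar b_n\bar\theta_n$, and applying Abel summation to $\sum_k\frac{b_k}{\gamma_k}(\theta_k-\theta_{k-1})$, one arrives at an identity of the form
\[
H\,\bar b_n\,\bar\theta_n \;=\; -\sum_{k=1}^n b_k\Upsilon_k \;+\;\frac{b_n}{\gamma_n}\,\theta_n\;+\;\sum_{k=1}^{n-1}E_k\,\theta_k\;+\;H\sum_{k=1}^n b_k\gamma_k\bigl(H\theta_{k-1}+\Upsilon_k\bigr)\;+\;O(1),
\]
where $\|E_k\|=o(b_k)$ by (L3) (which controls $\frac{b_{k+1}}{\gamma_{k+1}}-\frac{b_k}{\gamma_k}$) and the $O(1)$ collects the deterministic boundary contributions of $\theta_0$. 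Dividing by $\bar b_n$ and inverting $H$ (invertible since $H$ is contracting), each statement reduces to controlling $-H^{-1}\frac1{\bar b_n}\sum_k b_k\Upsilon_k$ together with showing that the three remaining sums, once divided by $\bar b_n$, are negligible \emph{at the scale appearing in the conclusion} — not merely $o(1)$. Since all assertions are made on $\{\theta_n\to0\}\in\cF_\infty$, the arguments are carried out after localisation to this event, and the central limit theorem in [II] is meant in the corresponding conditional (stable) sense.

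For [I], write $\Upsilon_k=\delta_k(\mu+e_k)$ with $e_k\to0$ on the event. The Toeplitz/Kronecker lemma applied with the weights $b_k\delta_k$ — admissible because $\sum_k b_k\delta_k\to\infty$ by~(\ref{eq7845}) — gives $\frac1{\bar b_n}\sum_k b_k\Upsilon_k=\bigl(\frac1{\bar b_n}\sum_k b_k\delta_k\bigr)(\mu+o(1))$. One then establishes the a priori estimate $\theta_k=O(\delta_k)$ on the event (a Gr\"onwall-type consequence of $\|\theta_k\|\le(1-\gamma_kL)\|\theta_{k-1}\|+C\gamma_k\delta_k$, using (L1) and $\Upsilon_k=O(\delta_k)$), which makes the remaining sums comparable to $\sum_k o(b_k)\delta_k$ and $\sum_k b_k\gamma_k\delta_k$; both are $o\bigl(\sum_k b_k\delta_k\bigr)$ by Toeplitz, and the boundary term $\frac{b_n}{\gamma_n}\theta_n$ is $o\bigl(\sum_{k\le n}b_k\delta_k\bigr)$ by applying~(\ref{eq7845}) with $L(n)=n-\lceil\gamma_n^{-1}\rceil$, admissible by (L2). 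This yields $\bigl(\frac1{\bar b_n}\sum_k b_k\delta_k\bigr)^{-1}\bar\theta_n\to-H^{-1}\mu$.

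For [II], normalise by $\sigma_n=\bar b_n^{-1}\sqrt{\sum_l(b_l\delta_l)^2}$ and view $X_{n,k}:=\sigma_n^{-1}\bar b_n^{-1}b_k\Upsilon_k$, $k=1,\dots,n$, as a martingale difference array. From $\sigma_n^2\bar b_n^2=\sum_l(b_l\delta_l)^2\to\infty$ (by~(\ref{eq974578})), the hypothesis on $\cov(\Upsilon_k\mid\cF_{k-1})$, and Toeplitz, one gets $\sum_k\E[X_{n,k}X_{n,k}^\dagger\mid\cF_{k-1}]\to\Gamma$ on the event, and the displayed truncated-second-moment hypothesis is exactly the conditional Lindeberg condition for $(X_{n,k})$; the martingale central limit theorem then yields $\sigma_n^{-1}\bar b_n^{-1}\sum_k b_k\Upsilon_k\Rightarrow\mathcal N(0,\Gamma)$, hence $-H^{-1}(\cdots)\Rightarrow\mathcal N(0,H^{-1}\Gamma(H^{-1})^\dagger)$. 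The remaining sums must be shown to be $o_\P(\sigma_n)$: here the classical $L^2$-bound of Theorem~\ref{thm1} provides, on the event, an $L^2$-rate of the expected order $\E\|\theta_k\|^2=O(\gamma_k\delta_k^2)$, from which the boundary term is $o(\sigma_n)$ by the same consequence of~(\ref{eq974578}) and (L2), while for $\sum_k E_k\theta_k$ and $\sum_k b_k\gamma_k(H\theta_{k-1}+\Upsilon_k)$ one must \emph{not} bound term by term (too lossy when $\psi<\frac12$) but instead substitute $\theta_k=-H^{-1}\Upsilon_k+r_k$, control the weighted martingale sums of $\Upsilon_k$ directly in $L^2$ by $o\bigl(\sqrt{\sum_l(b_l\delta_l)^2}\bigr)$, and handle the remainder $(r_k)$ through a further summation by parts against the slowly varying discrete Green kernel $\prod_{i>k}(I+\gamma_iH)$.

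Part [III] is comparatively soft: by Markov's inequality and the hypothesis, the weighted noise average $\frac1{\bar b_n}\sum_l b_l\Upsilon_l$ is $o(\eps_n)$ in $L^1$ (the first $n_0$ summands contributing $O(\bar b_n^{-1})$ only), and the boundary and $E_k$-terms are dominated — via the Green-kernel estimate $\frac{b_n}{\gamma_n}\gamma_j\prod_{i=j+1}^n(1-\gamma_iL)\le C\,b_j$, itself a consequence of (L1)--(L3) — by a constant multiple of $\frac1{\bar b_n}\sum_l b_l\E\|\Upsilon_l\|$, whence $\eps_n^{-1}\bar\theta_n\to0$ in probability. The real work throughout is the bookkeeping just described: showing that the boundary and error terms vanish \emph{at the correct rate}, which is precisely where monotonicity of $(\gamma_n)$, (L2), (L3), and the tail conditions~(\ref{eq7845}) and~(\ref{eq974578}) must be combined to extract the necessary regularity of $(\delta_n)$ and of the kernels $\prod_{i>k}(I+\gamma_iH)$; this is the step I expect to be the main obstacle.
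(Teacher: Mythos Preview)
Your approach via Abel summation (the classical Polyak--Juditsky route) is genuinely different from the paper's. The paper solves the linear recursion explicitly and obtains the representation~(\ref{eq_rep}),
\[
\bar\theta_n=\text{(two terms of order }O(\bar b_n^{-1}))+\frac1{\bar b_n}\sum_{l=n_0+1}^n b_l\,\bar\cH[l,n]\,\Upsilon_l,
\]
and the key technical step is Lemma~\ref{le:H-1}: under (L1)--(L3) alone the averaged Green kernel $\bar\cH[l,n]=\sum_{k=l}^n\frac{\gamma_l b_k}{b_l}\cH[l,k]$ is uniformly bounded and converges to $-H^{-1}$ as $l,n\to\infty$ with $t_n-t_l\to\infty$. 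With this in hand the three parts are direct: in [I] and [III] one bounds $\sum_l b_l(\bar\cH[l,n]+H^{-1})\Upsilon_l$ by splitting the range of~$l$ (this is where Lemma~\ref{le:count_set} and the tail conditions~(\ref{eq7845}),~(\ref{eq974578}) enter), and in [II] one applies the martingale CLT directly to the triangular array $\frac{b_l}{\bar b_n}\bar\cH[l,n]\Upsilon_l$. No estimate on $\theta_k$ is ever needed; the two initial-value contributions are already $O(\bar b_n^{-1})$.

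Your Abel-summation identity, by contrast, produces error terms that contain $\theta_k$ explicitly: the boundary term $\frac{b_n}{\gamma_n}\theta_n$, the sum $\sum_k E_k\theta_k$, and $\sum_k b_k\gamma_k(H\theta_{k-1}+\Upsilon_k)$. For [I] these can in principle be controlled (the kernel estimate you quote in [III] does most of the work), though your intermediate claim $\theta_k=O(\delta_k)$ requires a regularity of $(\delta_k)$ that is \emph{not} assumed, so the Gr\"onwall step as written has a gap. Part~[III] is fine. The real difficulty is~[II], and here there is a genuine gap. The hypotheses of~[II] provide \emph{no} unconditional bound on $\E\|\Upsilon_n\|^2$ or on $\E\|\theta_n\|^2$: one only has $\delta_n^{-2}\cov(\Upsilon_n\mid\cF_{n-1})\to\Gamma$ on the $\cF_\infty$-event $\{\theta_n\to0\}$, which cannot be turned into a moment estimate. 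Your appeal to Theorem~\ref{thm1} is therefore not legitimate---its hypotheses (an unconditional bound on $\E[\1_{B_{\eps'}}(\theta_{n-1})|D_n|^2]$) are strictly stronger than those of~[II]. The proposed substitution $\theta_k=-H^{-1}\Upsilon_k+r_k$ is not an identity satisfied by the dynamics, and the ``further summation by parts against the Green kernel'' you allude to is exactly reconstructing the paper's representation~(\ref{eq_rep}). The point of the paper's route is precisely that by working with $\bar\cH[l,n]$ from the outset, the $\theta_k$'s never appear in the error terms, and~[II] goes through under the minimal stated assumptions.
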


\subsection{Some technical estimates}

For the proof of Theorem~\ref{theo:lin_main} we derive several preliminary results. Note that  (\ref{eq:lin}) is a linear equation and in terms of  $$\cH[l,k]= \prod_{r=l+1}^k (\1+\gamma_r H)\qquad (l,k\in\N,l\leq k)  $$
we get for $n,n_0\in\N$ with $n\geq n_0$
$$
\theta_n = \cH[n_0,n] \theta_{n_0} + \sum_{k=n_0+1}^n \gamma_k \cH[k,n] \Upsilon_k
$$
and
\begin{align}\begin{split}\label{eq_rep}
\bar \theta_n&= \frac 1{\bar b_n} \sum_{k=0}^n b_k \theta_k=\frac 1{\bar b_n} \sum_{k=0}^{n_0-1}  b_k \theta_k+  \frac 1{\bar b_n}  \sum_{k=n_0}^n b_k \Bigl( 
\cH[n_0,k] \theta_{n_0} + \sum_{l=n_0+1}^k \gamma_l \cH[l,k] \Upsilon_l\Bigr)\\
&=\frac 1{\bar b_n} \sum_{k=0}^{n_0-1}  b_k \theta_k+\frac 1{\bar b_n}  \sum_{k=n_0}^n b_k 
\cH[n_0,k] \theta_{n_0} + \frac 1{\bar b_n}
  \sum_{l=n_0+1}^n b_l \bar \cH[l,n] \Upsilon_l,
\end{split}\end{align} 
where
$$
\bar \cH[l,n]= \sum_{k=l}^n \frac{\gamma_l  b_k}{b_l}  \cH[l,k].
$$
In the proof of Theorem~\ref{theo:lin_main} we will show that the first two terms in representation~(\ref{eq_rep}) are asymptotically negligible. As we will show in this section  $\bar\cH[l,n]$ is for large $l$ and $n$ typically close to $-H^{-1}$.

We let for $n\in\N_0$
$$
t_n=\sum_{k=1}^n \gamma_k
$$
and for $l,k\in\N$ with  $l\leq k$ and $s\in[t_{k-1}-t_{l-1},t_k-t_{l-1})$ we set
$$
K_l(s):=k.
$$
Note that for each $l\in\N$, $K_l$ defines a function mapping $[0,\infty)$ onto $\{l,l+1,\dots\}$. Furthermore, we let
$$
F_l(s):= \frac {\gamma_l\,b_{K_l(s)}}{\gamma_{K_l(s)}\, b_l}
$$
Understanding the behaviour of $F_l$ will be the key tool for proving that $\bar\cH[l,n]$ is close to $-H^{-1}$.

\begin{lemma}\label{le:B3'} Assumption (L3) implies that 
\begin{enumerate}
\item[(L3')] $F_l$ converges pointwise to $1$ and
\item[(L3'')] there exists a measurable function $\bar F$ such that $F_l\leq \bar F$ for all $l\geq n_0$  with $n_0\in\N$ sufficiently large, and 
$$
\int_0^\infty \bar F(s) \,(s\vee 1) e^{-Ls}\, ds <\infty.
$$
\end{enumerate}
\end{lemma}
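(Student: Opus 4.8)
The plan is to unwind the definitions of $F_l$, $K_l$ and the fractions appearing in (L3), and to show that the two conclusions follow from, respectively, the pointwise asymptotic relation $\frac{b_{n+1}\gamma_n}{b_n\gamma_{n+1}}=1+o(\gamma_n)$ and the bound $\frac{\gamma_n}{\gamma_{n+1}}=\cO(1)$. First I would observe that for fixed $l$ and $s$ one has $K_l(s)=k$ for some $k\geq l$ with $t_{k-1}-t_{l-1}\le s<t_k-t_{l-1}$, and that $F_l(s)$ is a telescoping product: writing $\frac{\gamma_l b_k}{\gamma_k b_l}=\prod_{r=l}^{k-1}\frac{\gamma_r b_{r+1}}{\gamma_{r+1}b_r}$, each factor equals $1+o(\gamma_r)$ by (L3). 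For (L3'): since $t_n\to\infty$ (because $\gamma_n$ decreasing with $n\gamma_n\to\infty$ forces $\sum\gamma_n=\infty$), for fixed $s$ the index $k=K_l(s)$ stays within a bounded ``$t$-window'' of $l$, i.e.\ $t_{k-1}-t_{l-1}\le s$; combined with $\gamma$ decreasing this bounds $k-l$ in a way that lets the number of factors times $\max_{r\ge l}\gamma_r\to 0$, so $\log F_l(s)=\sum_{r=l}^{k-1}\log(1+o(\gamma_r))\to 0$ as $l\to\infty$. (One has to be a little careful that $o(\gamma_r)$ is uniform enough; this follows because $\sum_{r=l}^{k-1}\gamma_r\le s+\gamma_l$ is bounded and the $o(\cdot)$ is a genuine little-$o$.)

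For (L3''), the envelope $\bar F$: here the point is to bound $F_l(s)$ uniformly in $l\ge n_0$ by a function of $s$ alone that is integrable against $(s\vee 1)e^{-Ls}$. From $\frac{\gamma_n}{\gamma_{n+1}}\le C$ and $\frac{b_{n+1}\gamma_n}{b_n\gamma_{n+1}}=1+o(\gamma_n)$ one gets, for $n_0$ large, $\frac{\gamma_r b_{r+1}}{\gamma_{r+1}b_r}\le 1+c\gamma_r\le e^{c\gamma_r}$ for all $r\ge n_0$, hence $F_l(s)=\prod_{r=l}^{k-1}\frac{\gamma_r b_{r+1}}{\gamma_{r+1}b_r}\le \exp\bigl(c\sum_{r=l}^{k-1}\gamma_r\bigr)\le \exp\bigl(c(s+\gamma_{n_0})\bigr)$, using $\sum_{r=l}^{k-1}\gamma_r=t_{k-1}-t_{l-1}\le s$ and $\gamma_l\le\gamma_{n_0}$. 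Thus $\bar F(s):=e^{c\gamma_{n_0}}e^{cs}$ works as a pointwise envelope provided we can also ensure the constant $c$ can be taken $<L$; this is the subtle point. The constant $c$ comes only from the $o(\gamma_n)$ term, so in fact for every $c>0$ there is an $n_0$ with $\frac{\gamma_r b_{r+1}}{\gamma_{r+1}b_r}\le e^{c\gamma_r}$ for $r\ge n_0$ — hence we may fix any $c\in(0,L)$, giving $\int_0^\infty \bar F(s)(s\vee 1)e^{-Ls}\,ds\le e^{c\gamma_{n_0}}\int_0^\infty (s\vee 1)e^{-(L-c)s}\,ds<\infty$.

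The main obstacle I anticipate is the careful handling of the relationship between the combinatorial index increment $k-l=K_l(s)-l$ and the continuous variable $s$: one must convert ``$\sum_{r=l}^{k-1}\gamma_r\le s$'' into the needed control while keeping all estimates uniform over $l\ge n_0$, and in (L3') one additionally needs that the little-$o$ in $\frac{b_{r+1}\gamma_r}{b_r\gamma_{r+1}}-1=o(\gamma_r)$ translates into $\sum_{r=l}^{k-1}\bigl(\frac{b_{r+1}\gamma_r}{b_r\gamma_{r+1}}-1\bigr)\to 0$ as $l\to\infty$ with $\sum_{r=l}^{k-1}\gamma_r$ staying bounded by $s+\gamma_l$. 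This last implication is where I would spend the most care: given $\eta>0$, pick $l$ so large that $|\frac{b_{r+1}\gamma_r}{b_r\gamma_{r+1}}-1|\le \eta\gamma_r$ for $r\ge l$, and then $|\sum_{r=l}^{k-1}(\cdots)|\le \eta\sum_{r=l}^{k-1}\gamma_r\le \eta(s+\gamma_{n_0})$, which is $\le \eta(s+1)$ times a constant — small. Everything else (the telescoping identity, $t_n\to\infty$, the passage from the log-estimate back to $F_l$ via $e^x\le 1+x e^{x}$ or simply monotonicity) is routine.
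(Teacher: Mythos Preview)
Your proposal is correct and follows essentially the same route as the paper. The paper writes the argument as an induction on $k$ to establish $F_l(s)\le e^{\delta s}$ (and analogously $F_l(s)\ge e^{-2\delta(s-\gamma_l)}$ for the lower bound), whereas you phrase it via the telescoping product $F_l(s)=\prod_{r=l}^{k-1}\frac{\gamma_r b_{r+1}}{\gamma_{r+1}b_r}$ and the inequality $1+c\gamma_r\le e^{c\gamma_r}$; these are the same computation in slightly different packaging, and both take $\bar F(s)=e^{cs}$ with $c<L$ (the paper uses $c=L/2$). One minor remark: your first heuristic for (L3'), ``number of factors times $\max_{r\ge l}\gamma_r\to 0$'', is not the right mechanism (the number of factors $k-l$ typically grows as $l\to\infty$), but you correctly replace it by the $\eta$-argument at the end, which is exactly what is needed and matches the paper's two-sided exponential bound.
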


\begin{proof}
Let $\delta>0$. By assumption (L3)  there exists $n_\delta$ such that for all $n\geq n_\delta$
$$
\Big|\frac {b_{n+1}/b_n}{\gamma _{n+1}/\gamma_n}-1\Big| \leq \delta \gamma_n
$$
Now consider $l\geq n_\delta$. The fuction  $F_l$ is constant on each interval $[t_{k-1}-t_{l-1},t_k-t_{l-1})$ and attains the value~$a_k=\frac{\gamma_lb_k}{\gamma_k b_l}$ ($k=l,l+1,\dots$). Note that  $a_l=1$ and  $F_l(s)\leq e^{\delta s}$ for $s\in [0,t_l-t_{l-1})$. By induction we prove that 
$$
F_l(s)\leq e^{\delta s}
$$
for all $s\in [t_{k-1}-t_{l-1},t_k-t_{l-1})$ with $k\in \N_0$. 
Indeed, using the induction hypothesis  we conclude that
$$
a_{k+1}= a_k \frac {b_{k+1}/b_k}{\gamma_{k+1}/\gamma_k}\leq (1+\delta\gamma_k) a_k\leq e^{\delta\gamma_k} F_l(t_{k-1}-t_{l-1})\leq e^{\delta(t_{k-1}-t_{l-1}+\gamma_k)}\leq e^{\delta(t_{k}-t_{l-1})}
$$
and hence, for $s\in [t_{k}-t_{l-1},t_{k+1}-t_{l-1})$, $F_l(s)=a_{k+1}\leq e^{\delta s}$. 
In particular, we get for the choice $n_0\geq n_{L/2}$ that  $\bar F(s)=e^{Ls/2}$ is an integrable majorant in the sense of (L3'').

To prove pointwise convergence (L3') it remains to provide an estimate in the converse direction.
Based on the estimate $e^{-2x}\leq 1-x$ for $x\in[0,\frac 12]$ one argues in complete analogy to before that for  $\delta\in(0,(2\gamma_1)^{-1})$ and   $l\geq n_\delta$ 
$$
F_l(s)\geq e^{-2\delta (s-\gamma_l)}
$$
for all $s\geq 0$.
Since $\delta$ can be chosen arbitrarily small we obtain with the respective upper bound the pointwise convergence of (L3'). 
\end{proof}

\begin{lemma}\label{le:count_set}
Assumption (L2) implies that  for every $C>0$
\begin{enumerate}\item[(L2')]$\displaystyle{
\lim_{n\to\infty} \frac 1n \#\{l\in\{1,\dots,n\}: t_n-t_l\leq C\}=0.}$
\end{enumerate}
\end{lemma}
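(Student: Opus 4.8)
The plan is to argue by contradiction, or rather directly by extracting the quantitative content of $n\gamma_n\to\infty$. Fix $C>0$ and $\eps>0$; I want to show that for $n$ large the set $A_n:=\{l\le n: t_n-t_l\le C\}$ has cardinality at most $\eps n$. Observe that $A_n$ is an ``interval'' of indices of the form $\{l_n+1,\dots,n\}$ where $l_n$ is the smallest index with $t_n-t_{l_n}\le C$ (using that $(t_k)$ is increasing, hence $t_n-t_l$ is decreasing in $l$). So $\#A_n=n-l_n$, and the claim is exactly that $l_n/n\to 1$, i.e.\ $(n-l_n)/n\to 0$.

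The key step is the following: since $(\gamma_k)$ is decreasing, for every $l\le k\le n$ we have $t_n-t_l=\sum_{r=l+1}^n\gamma_r\ge (n-l)\gamma_n$. Hence if $l\in A_n$, then $(n-l)\gamma_n\le t_n-t_l\le C$, so $n-l\le C/\gamma_n$, and therefore
$$
\frac 1n\#\{l\le n: t_n-t_l\le C\}=\frac{n-l_n}{n}\le \frac{C}{n\gamma_n}.
$$
Now assumption (L2), $n\gamma_n\to\infty$, makes the right-hand side tend to $0$, which is precisely (L2'). Note that monotonicity of $(\gamma_n)$ is available: in the linear-systems setting of this section it is assumed explicitly right after \eqref{eq:lin}, so the bound $\gamma_r\ge\gamma_n$ for $r\le n$ is legitimate.

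I do not expect a serious obstacle here; the only point requiring a moment's care is the direction of the monotonicity estimate — one wants a \emph{lower} bound on $t_n-t_l$, so one bounds each of the $n-l$ summands $\gamma_{l+1},\dots,\gamma_n$ from below by the smallest one, $\gamma_n$, which is where decreasingness of $(\gamma_k)$ enters. Everything else is a one-line consequence of (L2). (If one wanted to avoid even invoking monotonicity, a slightly more delicate argument partitioning $\{1,\dots,n\}$ according to $t_k$-values would work, but since $(\gamma_n)$ is decreasing throughout this section the short argument above suffices.)
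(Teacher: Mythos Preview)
Your argument is correct. Since $(\gamma_k)$ is monotonically decreasing (this is part of the standing hypotheses stated right after~\eqref{eq:lin}), the lower bound $t_n-t_l=\sum_{r=l+1}^n\gamma_r\ge (n-l)\gamma_n$ is valid, and from $t_n-t_l\le C$ one gets $n-l\le C/\gamma_n$, hence $\#A_n\le C/\gamma_n+1$ and $\tfrac1n\#A_n\to 0$ by (L2). The minor off-by-one in your description of $l_n$ is irrelevant.

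This is, however, not the route the paper takes. The paper establishes instead the inequality $t_n-t_l\ge \delta^{-1}\log\tfrac{n+1}{l+1}$ for any fixed $\delta>0$ and large enough indices, via an inductive bound $K_l(s)\le l\,e^{\delta s}$ on the inverse-time function, and then lets $\delta\downarrow 0$ after counting. That argument uses only (L2) and never invokes monotonicity of $(\gamma_n)$; it also parallels the style of the companion Lemma~\ref{le:B3'}. Your approach trades this generality for brevity: by using the monotonicity that is in any case assumed throughout the section, you obtain a one-line proof. For the lemma as stated your shortcut is perfectly adequate and more economical; the paper's version would only be preferable if one wished to relax the monotonicity hypothesis on $(\gamma_n)$.
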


\begin{proof}Let $\delta>0$ and fix $n_\delta\in\N$ such that $\delta \gamma_{n+1}\geq  n^{-1}$ for all $n\geq n_\delta$. We show that for $l\geq n_\delta$ one has
$$
K_{l}(s)\leq l e^{\delta s}.
$$
Clearly this is true for  $s\in[0,t_{l}-t_{l-1})$. We proceed by induction. Suppose we verified the statement on $[0,t_k-t_{l-1})$ we show that it holds for  $s\in [t_{k}-t_{l-1},t_{k+1}-t_{l-1})$. Using the induction hypothesis we get that 
$$
K_l(s)= k+1= \frac {k+1}k K_l(s-\gamma_{k+1}) \leq e^{1/k} l e^{\delta (s-\gamma_{k+1})}\leq l\,e^{\delta s}.
$$

Note that for $n\geq l\geq n_\delta$
$$
n+1=K_{l+1}(t_{n}-t_{l})\leq (l+1) e^{\delta (t_{n}-t_{l})}
$$
so that $t_{n}-t_{l}\geq \delta^{-1} \log \frac{n+1}{l+1}$. Consequently,
$$
\limsup_{n\to\infty}  \frac 1n \#\{l\in\{1,\dots,n\}: t_n-t_l\leq C\}\leq \limsup_{n\to\infty}  \frac 1n \#\Bigl\{l\in\{1,\dots,n\}:\log \frac{n+1}{l+1}\leq \delta C\Bigr\}=1-e^{-\delta C}
$$
and the statement follows since $\delta>0$ can be chosen arbitrarily small.
\end{proof}

%

\begin{lemma}For a $d\times d$-matrix $A$ it holds \begin{itemize}
\item[(i)] $\|e^A-\1\|\leq e^{\|A\|} \|A\|$
\item[(ii)]  $\|e^A- (\1+A)\|\leq \frac12 e^{\|A\|} \|A\|^2$
\end{itemize}
\end{lemma}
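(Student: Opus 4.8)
The plan is to work directly from the exponential series $e^A=\sum_{n\ge 0}\frac1{n!}A^n$, which converges absolutely in operator norm since $\|A^n\|\le\|A\|^n$ by submultiplicativity; this absolute convergence is precisely what legitimizes the termwise norm estimates used below. Both inequalities will then come out of the same tail estimate, differing only in how many leading terms are subtracted off.

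For (i) I would subtract the constant term, writing $e^A-\1=\sum_{n\ge 1}\frac1{n!}A^n$, so that the triangle inequality together with submultiplicativity gives $\|e^A-\1\|\le\sum_{n\ge 1}\frac{\|A\|^n}{n!}$. Factoring out one power of $\|A\|$ and using $n!\ge(n-1)!$ bounds the remaining sum by $\sum_{n\ge 1}\frac{\|A\|^{n-1}}{(n-1)!}=e^{\|A\|}$ after the reindexing $m=n-1$, which yields $\|e^A-\1\|\le\|A\|\,e^{\|A\|}$.

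For (ii) the same idea applies with the first two terms removed: $e^A-(\1+A)=\sum_{n\ge 2}\frac1{n!}A^n$, hence $\|e^A-(\1+A)\|\le\sum_{n\ge 2}\frac{\|A\|^n}{n!}$. Here I factor out $\|A\|^2$ and use the elementary inequality $n!=n(n-1)(n-2)!\ge 2\,(n-2)!$, valid for $n\ge 2$, which bounds the remaining sum by $\tfrac12\sum_{n\ge 2}\frac{\|A\|^{n-2}}{(n-2)!}=\tfrac12 e^{\|A\|}$, giving the claimed bound.

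There is essentially no obstacle: the statement is a two-line power-series estimate, and the only points requiring a moment's care are the legitimacy of the termwise estimate (guaranteed by absolute convergence in operator norm) and the two factorial inequalities $n!\ge(n-1)!$ and $n!\ge 2(n-2)!$. If one prefers an integral-remainder formulation, an alternative route is $e^A-\1=\int_0^1 A e^{tA}\,dt$ and $e^A-(\1+A)=\int_0^1(1-t)A^2 e^{tA}\,dt$, then bounding $\|e^{tA}\|\le e^{t\|A\|}\le e^{\|A\|}$ under the integral sign; this produces the same constants $1$ and $\tfrac12$.
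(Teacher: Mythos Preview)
Your proof is correct and follows essentially the same route as the paper: both subtract the leading terms of the exponential series, bound termwise via submultiplicativity, and extract the factors $\|A\|$ (resp.\ $\tfrac12\|A\|^2$) using $n!\ge(n-1)!$ (resp.\ $n!\ge 2(n-2)!$) before summing to $e^{\|A\|}$. The only cosmetic difference is that the paper treats (ii) first and remarks that (i) is analogous, whereas you spell out both.
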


\begin{proof}
One has $e^A-(\1+A)=\sum_{k=2}^\infty \frac 1{k!} A^k$ so that
$$
\|e^A-(\1+A)\|\leq \sum_{k=2}^\infty \frac 1{k!} \|A\|^k\leq \frac 12\|A\|^2 \sum_{k=2}^\infty \frac 1{k!} \|A\|^k  = 
\frac 12 e^\xi \|A\|^2
$$
and (ii) follows. (i) follows by exactly te same argument. 
\end{proof}

\begin{prop}\label{prop345}  Suppose that (L1) holds for $L>0$ and $\eps_0>0$ and let $n_0\in\N$ with $\gamma_{n_0}\leq \eps_0$.
Then for all $m\geq r\geq n_0$
$$
\Bigl\|e^{(t_m-t_r)H} -\prod_{l=r+1}^m (\1+\gamma_l H)\Bigr\|\leq  \|H\|^2 e^{\gamma_{1}(L+\|H\|)} e^{-(t_{m}-t_r)L} \sum_{q=r+1}^m \gamma_q^2 ,
$$
where
$$
t_r=\sum_{l=1}^r \gamma_l.
$$
\end{prop}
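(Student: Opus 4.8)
The plan is to estimate the difference telescopically, writing both $e^{(t_m-t_r)H}$ and the product $\prod_{l=r+1}^m(\1+\gamma_l H)$ as products over the single steps and comparing factor by factor. Precisely, set $P_k = \prod_{l=r+1}^k(\1+\gamma_l H)$ and $E_k = e^{(t_k-t_r)H}$, so that the quantity to bound is $\|E_m - P_m\|$. The key identity is the telescoping sum
$$
E_m - P_m = \sum_{q=r+1}^m E_{q,m}\bigl(e^{\gamma_q H} - (\1+\gamma_q H)\bigr) P_{q-1},
$$
where $E_{q,m} = e^{(t_m - t_q)H}$ and we have used $E_m = E_{q,m}\, e^{\gamma_q H}\, E_{q-1}$ together with $E_{q-1} = e^{(t_{q-1}-t_r)H}$; one inserts and removes the mixed products $E_{q,m}\,(\1+\gamma_q H)\cdots(\1+\gamma_{r+1}H)$ in the standard way. (One must be slightly careful about the order of factors since the matrices need not commute, but since all factors are polynomials in $H$ or exponentials of multiples of $H$, everything here actually commutes, which simplifies bookkeeping; alternatively just keep the ordering as written.)

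Next I would bound each of the three blocks. For the middle factor, the preceding lemma part (ii) gives $\|e^{\gamma_q H} - (\1+\gamma_q H)\| \le \tfrac12 e^{\gamma_q\|H\|}\gamma_q^2\|H\|^2 \le \tfrac12 e^{\gamma_1\|H\|}\|H\|^2\,\gamma_q^2$, using that $(\gamma_n)$ is decreasing so $\gamma_q \le \gamma_1$. For the tail product $E_{q,m} = e^{(t_m-t_q)H}$ I use (L1): since $\gamma_{n_0}\le \eps_0$ and all indices are $\ge n_0$, each step length lies in $[0,\eps_0]$, and more directly $\|e^{sH}\| \le e^{-sL}$ for $s\ge 0$ follows from (L1) by writing $e^{sH} = (e^{(s/N)H})^N$ and letting $N\to\infty$, or simply because $\sup\{\mathrm{Re}\,\lambda\} < -L$ is implied by (L1); so $\|E_{q,m}\| \le e^{-(t_m-t_q)L}$. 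For the head product $P_{q-1} = \prod_{l=r+1}^{q-1}(\1+\gamma_l H)$, again (L1) gives $\|\1+\gamma_l H\| \le 1-\gamma_l L \le e^{-\gamma_l L}$ for each $l$ (as $\gamma_l \le \gamma_{n_0}\le\eps_0$), hence $\|P_{q-1}\| \le e^{-(t_{q-1}-t_r)L}$.

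Combining, the $q$-th summand is bounded by
$$
e^{-(t_m-t_q)L}\cdot \tfrac12 e^{\gamma_1\|H\|}\|H\|^2\gamma_q^2 \cdot e^{-(t_{q-1}-t_r)L}
= \tfrac12 e^{\gamma_1\|H\|}\|H\|^2\,\gamma_q^2\, e^{-(t_m - t_r)L}\, e^{\gamma_q L},
$$
since $(t_m-t_q) + (t_{q-1}-t_r) = (t_m - t_r) - \gamma_q$. Using $\gamma_q \le \gamma_1$ once more to absorb $e^{\gamma_q L} \le e^{\gamma_1 L}$ into the prefactor $e^{\gamma_1(L+\|H\|)}$, and summing over $q$, yields exactly
$$
\|E_m - P_m\| \le \|H\|^2 e^{\gamma_1(L+\|H\|)} e^{-(t_m-t_r)L}\sum_{q=r+1}^m \gamma_q^2,
$$
where the factor $\tfrac12$ has been dropped against the crude bounds (it only helps). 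The main obstacle, such as it is, is purely organizational: getting the telescoping identity written with the correct factors and exponents so that the two $L$-exponentials recombine cleanly into $e^{-(t_m-t_r)L}e^{\gamma_q L}$; once that algebra is set up, every individual estimate is an immediate consequence of (L1) and the matrix lemma. No regular-variation or probabilistic input is needed here.
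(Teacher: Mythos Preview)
Your proof is correct and follows essentially the same approach as the paper: a telescoping decomposition of $e^{(t_m-t_r)H}-\prod_{l=r+1}^m(\1+\gamma_l H)$, the bound $\|e^{\gamma_q H}-(\1+\gamma_q H)\|\le \tfrac12 e^{\gamma_q\|H\|}\gamma_q^2\|H\|^2$ from the preceding lemma, and the contractivity estimates $\|\1+\gamma_l H\|\le e^{-\gamma_l L}$ and $\|e^{sH}\|\le e^{-sL}$ derived from (L1). The only cosmetic difference is that the paper telescopes with the exponential block on the inner side and the discrete product on the outer side, i.e.\ $\sum_q e^{(t_{q-1}-t_r)H}(e^{\gamma_q H}-(\1+\gamma_q H))\prod_{l=q+1}^m(\1+\gamma_l H)$, whereas you reverse the roles; since all factors are functions of $H$ and hence commute, both decompositions are valid and lead to the identical bound after recombining $(t_m-t_q)+(t_{q-1}-t_r)=(t_m-t_r)-\gamma_q$.
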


\begin{proof}
We write 
\begin{align*}
e^{(t_m-t_r)H} -\prod_{l=r+1}^m (\1+\gamma_l H)= \sum_{q=r+1}^m e^{(t_{q-1}-t_r)H} (e^{\gamma_qH}-(\1+\gamma_q H)) \prod_{l=q+1}^m (\1+\gamma_l H)
\end{align*}
and consider the three terms on the right hand side separately. 

First observe that for $m\geq q\geq n_0$
$$
\Bigl\|\prod_{l=q+1}^m (\1+\gamma_l H)\Bigr\|\leq \prod_{l=q+1}^m \underbrace{(\1-\gamma_l L)}_{\leq e^{-\gamma_l L}}\leq e^{-L(t_m-t_q)}.
$$
Using that
$
e^{(t_{q-1} -t_r)H}= \lim_{n\to\infty} (\1+\frac{t_{q-1}-t_r}n H)^n
$
 the above computation also implies  that $$\|e^{(t_{q-1}-t_r)H}\|\leq e^{-L(t_{q-1}-t_r)}.$$
Consequently,
$$
\Bigl\|e^{(t_m-t_r)H} -\prod_{l=r+1}^m (\1+\gamma_l H)\Bigr\|\leq  \frac 12 e^{-(t_{m}-t_r)L+\gamma_1L} e^{\gamma_1\|H\|} \|H\|^2 \sum_{q=r+1}^m \gamma_q^2 .
$$
\end{proof}

\begin{lemma}\label{le:H-1} Suppose that properties (L1) and (L3) hold. One has 
$$
\bar\cH[l,n]=\frac{\gamma_l}{b_l}\sum_{k=l}^n b_k \cH[l,k] \to -H^{-1}
$$
for $l,n\to\infty$ with $t_n-t_l\to\infty$. Further there exists $n_0\in\N$ such that the operator on the left hand side is uniformly bounded for all $n_0\leq l\leq n$.
\end{lemma}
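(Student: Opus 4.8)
The plan is to write $\bar\cH[l,n]$ as an integral and compare it with $\int_0^\infty e^{sH}\,ds$, which equals $-H^{-1}$ and converges absolutely because (L1) forces every eigenvalue of $H$ to have real part at most $-L<0$. Using the step function $K_l$ and the weight $F_l$ introduced just before the statement, and observing that $[t_{k-1}-t_{l-1},t_k-t_{l-1})$ has length $\gamma_k$ while $K_l$ takes the value $k$ there, one obtains the identity
\[
\bar\cH[l,n]=\int_0^{t_n-t_{l-1}} F_l(s)\,\cH[l,K_l(s)]\,ds .
\]
Two elementary facts will be used throughout: first, for $l\geq n_0$ with $\gamma_{n_0}\leq\eps_0$ one has from (L1) that $\|\1+\gamma_rH\|\leq e^{-\gamma_rL}$, hence $\|\cH[l,k]\|\leq e^{-L(t_k-t_l)}$; second, $0\leq s-(t_{K_l(s)}-t_l)\leq\gamma_l$ for every $s\geq0$, so $t_{K_l(s)}-t_l$ differs from $s$ by at most $\gamma_l$ and $e^{-L(t_{K_l(s)}-t_l)}\leq e^{L\gamma_1}e^{-Ls}$.

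Next I would estimate $\bar\cH[l,n]+H^{-1}$ by decomposing it as the sum of
(A)$\ \int_0^{t_n-t_{l-1}}F_l(s)\bigl(\cH[l,K_l(s)]-e^{(t_{K_l(s)}-t_l)H}\bigr)\,ds$,
(B)$\ \int_0^{t_n-t_{l-1}}F_l(s)\bigl(e^{(t_{K_l(s)}-t_l)H}-e^{sH}\bigr)\,ds$,
(C)$\ \int_0^{t_n-t_{l-1}}(F_l(s)-1)\,e^{sH}\,ds$, and
(D)$\ -\int_{t_n-t_{l-1}}^\infty e^{sH}\,ds$.
Term (A) is handled with Proposition~\ref{prop345} together with the crude bound $\sum_{q=l+1}^k\gamma_q^2\leq\gamma_l(t_k-t_l)$, which yields $\|(A)\|\leq C\gamma_l\int_0^\infty\bar F(s)(s\vee1)e^{-Ls}\,ds=\cO(\gamma_l)$ by (L3''). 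Term (B) is handled with the elementary estimate $\|e^A-\1\|\leq e^{\|A\|}\|A\|$ applied to $A=(s-(t_{K_l(s)}-t_l))H$, giving $\|(B)\|\leq C\gamma_l\int_0^\infty\bar F(s)e^{-Ls}\,ds=\cO(\gamma_l)$. Term (C) tends to $0$ by dominated convergence: $F_l\to1$ pointwise by (L3'), and $|F_l(s)-1|\,e^{-Ls}\leq(\bar F(s)+1)e^{-Ls}$ is integrable by (L3''); this bound is uniform in $n$. Term (D) has norm at most $L^{-1}e^{-L(t_n-t_{l-1})}$, which vanishes since $t_n-t_l\to\infty$ implies $t_n-t_{l-1}\to\infty$. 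Summing, $\|\bar\cH[l,n]+H^{-1}\|\leq\cO(\gamma_l)+\|(C)\|+L^{-1}e^{-L(t_n-t_{l-1})}$, and all contributions vanish in the stated limit $l,n\to\infty$ with $t_n-t_l\to\infty$.

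Finally, for the uniform boundedness the same integral representation gives at once
\[
\|\bar\cH[l,n]\|\leq\int_0^{t_n-t_{l-1}}F_l(s)\,\|\cH[l,K_l(s)]\|\,ds\leq e^{L\gamma_1}\int_0^\infty\bar F(s)\,e^{-Ls}\,ds<\infty
\]
for all $n\geq l\geq n_0$, provided $n_0$ is large enough that both $\gamma_{n_0}\leq\eps_0$ (needed for (L1)) and the majorant property of (L3'') hold. The only genuinely delicate point is the bookkeeping in (A) and (B): one must check that the extra factor $(s\vee1)$ produced by $\sum\gamma_q^2\leq\gamma_l(t_k-t_l)$ and by the exponential-continuity estimate is precisely the one absorbed by the moment condition $\int_0^\infty\bar F(s)(s\vee1)e^{-Ls}\,ds<\infty$ in (L3''); everything else is routine once the decomposition is written down.
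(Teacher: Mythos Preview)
Your proof is correct and follows essentially the same route as the paper. The paper introduces three intermediate sums $I_1=\bar\cH[l,n]$, $I_2=\frac{\gamma_l}{b_l}\sum_k b_k e^{(t_k-t_l)H}$, $I_3=\sum_k \gamma_k e^{(t_k-t_l)H}$ and bounds $\|I_1-I_2\|$, $\|I_2-I_3\|$, $\|I_3+H^{-1}\|$ separately; your pieces (A), (C), and (B)$+$(D) match these three comparisons, in a slightly different order. Writing the whole object as the single integral $\int_0^{t_n-t_{l-1}}F_l(s)\,\cH[l,K_l(s)]\,ds$ from the outset, as you do, makes the bookkeeping a bit tidier, but the underlying ingredients---Proposition~\ref{prop345} for the product-to-exponential comparison, (L3$'$) and (L3$''$) with dominated convergence for the weight $F_l$, and the exponential tail for the remainder---are identical.
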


\begin{proof}Recall that assumption (L3) implies (L3') and (L3''), see Lemma~\ref{le:B3'}. By (L1) we can fix $L>0$ and $n_0\in\N$ such that for $n\geq n_0$, $\|\1+\gamma_n H\|\leq 1-L\gamma_n$. For $n\geq l\geq n_0$ we consider
 $I_1=I_1(l,n)=\bar \cH[l,n]$, 
$$
I_2= I_2(l,n)=\frac{\gamma_l}{b_l}\sum_{k=l}^n b_k e^{(t_k-t_l)H} \text{ \ and \ } I_3=I_3(l,n)=\sum_{k=l}^n \gamma_k e^{(t_k-t_l)H}$$
and omit the $(l,n)$-dependence in the notation.

We analyse $\|I_1-I_2\|$. One has 
$$
I_1-I_2= \frac{\gamma_l}{b_l}\sum_{k=l}^n b_k (\cH[l,k]-e^{(t_k-t_l)H})
$$
By Proposition~\ref{prop345}, 
\begin{align*}
\|\cH[l,k]-e^{(t_k-t_l)H}\|&\leq  \|H\|^2 e^{\gamma_{1}(L+\|H\|)} e^{-(t_{k}-t_l)L}  \sum_{q=l+1}^k \gamma_q^2\\
&\leq  \|H\|^2e^{\gamma_{1}(L+\|H\|)}  \gamma_{l}  e^{-(t_{k}-t_l)L} (t_k-t_l) 
\end{align*}
so that
\begin{align*}
\|I_1-I_2\| &\leq \underbrace{ \|H\|^2e^{\gamma_{1}(L+\|H\|)} }_{=:C} \gamma_{l}\sum_{k=l}^n  \frac{\gamma_lb_k}{b_l\gamma_k} e^{-(t_{k}-t_l)L} (t_k-t_l) \gamma_k\\
&=C \gamma_l \int_0^{t_n-t_{l-1}} F_l(s) e^{-(t_{K_l(s)}-t_l)L}(t_{K_l(s)}-t_l)\, ds\\
&\leq C e^{\gamma_1 L} \gamma_l \int_0^{t_n-t_{l-1}} F_l(s) e^{-sL}s \, ds ,
\end{align*}
where we used that $t_{K_l(s)}-t_l\leq t_{K_l(s)-1}-t_{l-1}\leq s < t_{K_l(s)}-t_{l-1}= t_{K_l(s)}-t_l+\gamma_l$ in the previous step. Property (L3'') implies that there exists an integrable majorant for the latter integrand. Hence $\|I_1-I_2\|$  is uniformly bounded and converges to zero as $l,n\to\infty$ with $l\leq n$.

We analyse $\|I_2-I_3\|$. One has
\begin{align*}
I_2-I_3= \sum_{k=l}^n \Bigl(\frac{\gamma_l b_k}{b_l \gamma_k}-1\Bigr) \gamma_k e^{(t_k-t_l) H}= \int_0^{t_n-t_{l-1}} \Bigl( \frac {\gamma_l b_{K_l(s)}}{b_l \gamma_{K_l(s)}}-1\Bigr) e^{(t_{K_l(s)}-t_l)H} \, ds
\end{align*}
and using that  $t_{K_l(s)}-t_l\leq t_{K_l(s)-1}-t_{l-1}\leq s$ and the definition of $F_l$ we get that
$$
\|I_2-I_3\|\leq \int_0^{t_n-t_{l-1}} |F_l(s)-1|\, e^{-Ls} \, ds.
$$
As before there exists an integrable majorant (L3''). Hence $\|I_2-I_3\|$ is uniformly bounded and with dominated convergence and (L3') we conclude that the latter integral converges to zero as $l,n\to\infty$ with $l\leq n$.

We analyse $\|I_3+H^{-1}\|$. One has 
\begin{align*}
I_3+H^{-1} =\int_0^{t_n-t_{l-1}} e^{(t_{K_l(s)}-t_l)H} \, ds - \int_0^\infty e^{s H}\,ds
\end{align*}
and noting that $t_{K_l(s)}-t_l\leq t_{K_l(s)-1}-t_{l-1}\leq s < t_{K_l(s)}-t_{l-1}= t_{K_l(s)}-t_l+\gamma_l$ we conclude that
\begin{align*}
\|I_3-H^{-1} \|& \leq \int_0^{t_n-t_{l-1}} \|\underbrace{e^{(t_{K_l(s)}-t_l)H}-e^{sH}}_{=e^{(t_{K_l(s)}-t_l)H}(\1-e^{s-(t_{K_l(s)}-t_l)})}\| \, ds + \int_{t_n-t_{l-1}}^\infty \underbrace{\|e^{s H}\|}_{\leq e^{-sL}}\,ds\\
&\leq  \int_0^{t_n-t_{l-1}} \underbrace{e^{-L(t_{K_l(s)}-t_l)}}_{\leq e^{-Ls +L\gamma_l}}   \gamma_l\|H\| e^{\gamma_l \|H\|}\, ds + L^{-1} e^{-L(t_n-t_{l-1})}
\end{align*}
which converges to zero as $l,n\to\infty$ with $t_n-t_l\to\infty$. Here we used that $\|e^A-\1\|\leq \|A\| e^{\|A\|}$ for  a matrix $A$. Further $\|I_3\|$ is uniformly bounded since
$$
\|I_3\|\leq \int_0^{t_n-t_{l-1}} \| e^{(t_{K_l(s)}-t_l)H}\|\, ds \leq \int_0^\infty e^{-L(s-\gamma_l)}\, ds\leq \frac{e^{L\gamma_1}}L.
$$
\end{proof}

We state a classical  result of linear algebra that is usually used in the theoretical analysis of   stochastic approximation. 

\begin{lemma}\label{le:norm_ex}
If for $L>0$, $H\in\R^{d\times d}$ is an $L$-contracting matrix in the sense that
$$
\sup\{\mathrm{Re}(\lambda): \lambda\text{ e.v.\ of }H\}<-L,
$$
then there exists a norm $\|\cdot\|$ on $\R^{d\times d}$ induced by a scalar product on $\R^d$ and $\eps_0>0$ such that for all $\eps\in[0,\eps_0]$
$$
\|\1+\eps H\|\leq 1-\eps L.
$$
\end{lemma}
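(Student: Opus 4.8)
The plan is to reduce the statement to the classical Lyapunov/Hurwitz construction by absorbing the rate $L$ into a shift of $H$. Since $\sup\{\mathrm{Re}(\lambda):\lambda\text{ eigenvalue of }H\}<-L$, the matrix $A:=H+L\1$ has all of its eigenvalues in the open left half-plane, hence is Hurwitz, and there are constants $C\geq 1$, $a>0$ with $\|e^{tA}\|\leq Ce^{-at}$ for every $t\geq 0$. Therefore
\[
P:=\int_0^\infty e^{tA^\top}e^{tA}\,dt
\]
is a well-defined symmetric matrix, and $x^\top Px=\int_0^\infty\|e^{tA}x\|_2^2\,dt>0$ for $x\neq0$, so $P$ is positive definite. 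Differentiating $t\mapsto e^{tA^\top}e^{tA}$ and invoking the fundamental theorem of calculus (the boundary term at $+\infty$ vanishes by the exponential bound) yields the Lyapunov identity $A^\top P+PA=-\1$, i.e.\
\[
H^\top P+PH=-\1-2LP .
\]

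Next I would equip $\R^d$ with the scalar product $\langle x,y\rangle:=x^\top Py$, write $\|x\|:=\langle x,x\rangle^{1/2}$ for the induced norm and denote by $\|\cdot\|$ also the operator norm it induces on $\R^{d\times d}$; this is the norm appearing in the statement. For $x\in\R^d$ and $\eps\geq0$ one expands, using the Lyapunov identity,
\[
\|(\1+\eps H)x\|^2=\|x\|^2+\eps\,x^\top(H^\top P+PH)x+\eps^2\,x^\top H^\top PH\,x=(1-2\eps L)\|x\|^2-\eps\|x\|_2^2+\eps^2\,x^\top H^\top PH\,x,
\]
where $\|\cdot\|_2$ is the Euclidean norm. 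Comparing with $(1-\eps L)^2\|x\|^2=(1-2\eps L+\eps^2L^2)\|x\|^2$, the desired inequality $\|(\1+\eps H)x\|^2\leq(1-\eps L)^2\|x\|^2$ amounts to $\eps\,x^\top(H^\top PH-L^2P)x\leq\|x\|_2^2$, and the left-hand side is at most $\eps\,\|H^\top PH-L^2P\|_{\mathrm{op}}\,\|x\|_2^2$ (Euclidean operator norm). Hence it suffices to take $\eps_0:=\min\{1/L,\ \|H^\top PH-L^2P\|_{\mathrm{op}}^{-1}\}$ (with the second entry read as $+\infty$ when $H^\top PH=L^2P$): for $0\leq\eps\leq\eps_0$ one has $1-\eps L\geq0$ and $\|(\1+\eps H)x\|\leq(1-\eps L)\|x\|$ for all $x$, that is, $\|\1+\eps H\|\leq1-\eps L$.

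I expect no substantial obstacle here, since this is essentially a textbook fact. The only slightly delicate points are the convergence of the integral defining $P$ and the Lyapunov identity, both of which follow immediately from the strict spectral bound on $A=H+L\1$, together with the fact that two different norms appear in the expansion of $\|(\1+\eps H)x\|^2$, so that the Euclidean error term $\eps\|x\|_2^2$ must be compared against the $P$-norm when one extracts $\eps_0$. A useful by-product of this argument is that $\eps_0$ is obtained explicitly in terms of $H$ and $L$.
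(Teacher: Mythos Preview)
Your proof is correct. The paper does not actually prove this lemma; it merely states it as ``a classical result of linear algebra that is usually used in the theoretical analysis of stochastic approximation'' and moves on. Your Lyapunov construction---shifting to $A=H+L\1$, building $P=\int_0^\infty e^{tA^\top}e^{tA}\,dt$, and expanding $\|(\1+\eps H)x\|^2$ in the $P$-inner product---is exactly the standard way to supply the missing argument, and all steps (convergence of the integral, the identity $A^\top P+PA=-\1$, the comparison of the $\eps^2$ term against the Euclidean slack $\eps\|x\|_2^2$) are handled correctly. The explicit $\eps_0$ you obtain is a bonus the paper neither states nor needs.
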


\subsection{Proof of Theorem~\ref{theo:lin_main}}

We note that by Lemmas~\ref{le:B3'} and \ref{le:count_set} properties (L2'), (L3') and (L3'') are satisfied and we choose $n_0\in\N$ sufficiently large such that all properties hold. The proof of Theorem~\ref{theo:lin_main} relies on an asymptotic analysis of 
\begin{align*}
\Xi_n:  =\ \frac 1{\bar b_n}
  \sum_{l=n_0+1}^n b_l \bar \cH[l,n] \Upsilon_l
\end{align*}
for $n\geq n_0$. Note that by Lemma~\ref{le:H-1} all operators 
$\bar \cH[l,n]$ appearing above are uniformly bounded and furthermore $\bar \cH[l,n]\to -H^{-1}$ as $l,n\to\infty$ with $t_n-t_l\to\infty$. For each of the three statements of Theorem~\ref{theo:lin_main} we carry out the analysis separately. 
In each case it will be easy to see that the remaining terms of the representation~(\ref{eq_rep}) are asymptotically negligible.

\begin{proof}[Proof of Theorem~\ref{theo:lin_main} I]
For every $\eps>0$ there exists $l_0\in\N$ and $C_1>0$ such that
$$
\|\bar\cH[l,n]+H^{-1}\|\leq \eps
$$
for all $n\geq l> l_0\geq n_0$ with $t_n-t_l\geq C_1$. Denote by $L(n)$ the maximal index $l$ with $t_n-t_l\geq C$ and note that $\lim_{n\to\infty} (n-L(n))/n=0$ by  Lemma~\ref{le:count_set}. Using the uniform boundedness of $\bar\cH[l,n]$ we get that for an appropriate constant $C_2$, for sufficiently large $n$ ($L(n)\geq l_0$)
\begin{align}
\Bigl\|\Xi_n+ &\  \frac 1{\bar b_n}
  \sum_{l=n_0+1}^n b_l H^{-1} \Upsilon_l \Bigr\|\leq \frac 1{\bar b_n}
  \sum_{l=n_0+1}^n b_l \| \bar \cH[l,n]+H^{-1}\|\| \Upsilon_l\|
  \nonumber\\
 \label{eq972435671} \leq  &\ C_2 \frac 1{\bar b_n}
  \sum_{l=n_0+1}^{l_0} b_l  \|\Upsilon_l\| + \eps \frac 1{\bar b_n}
  \sum_{l=l_0+1}^{L(n)} b_l  \|\Upsilon_l\| +C_2 \frac 1{\bar b_n}
  \sum_{l=L(n)+1}^{n} b_l  \|\Upsilon_l\|
\end{align}
By assumption,  one has $\lim_{n\to\infty} \delta_n^{-1}\Upsilon_n=\mu$, on $\{\theta_n\to0\}$. Recalling~(\ref{eq7845}) we conclude that the first and third term in~(\ref{eq972435671}) are of order $o(\bar b_n^{-1}\sum_{k=1}^n b_k\delta_k)$ on $\{\theta_n\to0\}$ and the second term is for sufficiently large $n$ smaller than $2\eps \bar b_n^{-1}  \|\mu\| \sum_{k=1}^n b_k \delta_k$. Since $\eps>0$ is arbitrary we get that on $\{\theta_n\to0\}$
$$
\Bigl\|\Xi_n+   \frac 1{\bar b_n}
  \sum_{l=n_0+1}^n b_l H^{-1}\Upsilon_l \Bigr\|= o\Bigl(\frac1{\bar b_n} \sum_{k=1}^n b_k \delta_k\Bigr).
$$
On  $\{\theta_n\to0\}$ we have that $\delta_l^{-1}\Upsilon_l\to \mu$ so that with the same argument as above
$$
\Bigl\|\frac 1{\bar b_n}
  \sum_{l=n_0+1}^n b_l H^{-1}(\Upsilon_l-\delta_l \mu) \Bigr\|= o\Bigl(\frac1{\bar b_n} \sum_{k=1}^n b_k \delta_k\Bigr).
$$
Consequently, on $\{\theta_n\to0\}$
$$
\lim_{n\to\infty} \Bigl(\frac 1{\bar b_n}
  \sum_{l=1}^n b_l \delta_l\Bigr)^{-1} \Xi_n=-H^{-1} \mu.
$$
In view of~(\ref{eq_rep}) we have
$$
\bar \theta_n=\frac 1{\bar b_n} \sum_{k=0}^{n_0-1}  b_k \theta_k+\frac 1{\bar b_n}  \sum_{k=n_0}^n b_k 
\cH[n_0,k] \theta_{n_0} + \Xi_n
$$
and the first and second term on the right hand side are of lower order. Indeed, the terms are of order $\mathcal O((\bar b_n)^{-1})$ (Lemma~\ref{le:H-1}) which is asymptotically negligible.
\end{proof}

\begin{proof}[Proof of Theorem~\ref{theo:lin_main} II]
We verify the validity of a central limit theorem (see \cite[Cor. 3.1]{HaHe80}) for 
\begin{align*}
\Xi_n= \frac 1{\bar b_n}
  \sum_{l=n_0+1}^n b_l \bar \cH[l,n] \Upsilon_l.
\end{align*}
We note that the summands are martingale differences and consider
\begin{align*}
\frac 1{\bar b_n^2}
  \sum_{l=n_0+1}^n b_l^2 \cov( \bar \cH[l,n] \Upsilon_l|\cF_{l-1})=\frac 1{\bar b_n^2}
  \sum_{l=n_0+1}^n b_l^2  \bar \cH[l,n] \cov( \Upsilon_l|\cF_{l-1}) \bar \cH[l,n]^\dagger
\end{align*}
First we show convergence of the conditional covariances on $\{\theta_n\to0\}$.
One has
\begin{align*}
\| \bar \cH[l,n] &\cov( \Upsilon_l|\cF_{l-1}) \bar \cH[l,n]^\dagger- \delta_l^2  H^{-1}\Gamma (H^{-1})^\dagger \|\\
 &\leq \|\bar \cH[l,n]+H^{-1}\| \| \cov( \Upsilon_l|\cF_{l-1})\| \| \bar \cH[l,n]^\dagger\| \\
&+ \|H^{-1}\| \|\cov( \Upsilon_l|\cF_{l-1}) -\delta^2_l \Gamma\| \|\bar \cH[l,n]^\dagger\|+ \delta^2_l \| H^{-1}\| \|\Gamma\|\bar \cH[l,n]^\dagger+ (H^{-1})^\dagger\|.
\end{align*}
As a consequence of  the convergence of the conditional covariances one has   on $\{\theta_n\to0\}$  for sufficiently large~$l$, say $l\geq L$ with $L$ being an appropriate random variable,
$$
 \| \cov( \Upsilon_l|\cF_{l-1})\| \leq C_1 \delta_l^2.
$$
Further by Lemma~\ref{le:H-1} there exists a constant $C_2$ such that $\|\cH[l,n]\|\vee \|\cH[l,n]^\dagger\|\leq C_2$ for all $n_0\leq l\leq n$.
%
As in the proof of part I one shows that on $\{\theta_n\to0\}$ up to nullsets
\begin{align*}
\frac 1{\bar b_n^2}&
  \sum_{l=n_0+1}^n b_l^2  \|\bar \cH[l,n]+H^{-1}\| \| \cov( \Upsilon_l|\cF_{l-1})\| \| \bar \cH[l,n]^\dagger\|\\
  & \leq \frac 1{\bar b_n^2}
  \sum_{l=n_0+1}^{L\wedge n} b_l^2 \|\bar \cH[l,n]+H^{-1}\| \| \cov( \Upsilon_l|\cF_{l-1})\| \| \bar \cH[l,n]^\dagger\| + C_1 C_2  \frac 1{\bar b_n^2}
  \sum_{l=L\wedge n+1}^{n} (b_l \delta_l)^2   \|\bar \cH[l,n]+H^{-1}\|
\end{align*}
is of order $o( \sigma_n^2)$ with $\sigma_n^2=\bar b_n^{-2}   \sum_{l=1}^{n} (b_l \delta_l)^2$. The same is true for the terms
$$
\frac 1{\bar b_n^2}
  \sum_{l=n_0+1}^n b_l^2 \|H^{-1}\| \|\cov( \Upsilon_l|\cF_{l-1}) -\delta^2_l \Gamma\| \|\bar \cH[l,n]^\dagger\|
  $$
  and
$$
\frac 1{\bar b_n^2}
  \sum_{l=n_0+1}^n (b_l \delta_l)^2 \| H^{-1}\| \|\Gamma\|    \|\bar \cH[l,n]^\dagger+ (H^{-1})^\dagger\|.
$$
Consequently, on $\{\theta_n\to0\}$
$$
  \sigma_n^{-2}
   \frac 1{\bar b_n^2}
  \sum_{l=n_0+1}^n b_l^2 \cov( \bar \cH[l,n] \Upsilon_l|\cF_{l-1}) \to H^{-1}\Gamma (H^{-1})^\dagger
$$
and the conditional covariances converge.
It remains to verify the conditional Lindeberg condition. 
Recall that
there exists $C_2$ such that  $\|\bar \cH[l,n]\|\leq C_2$ for $n_0\leq l\leq n$. 
Consequently, for $\eps>0$,
$$
 \sigma_n^{-2} \sum_{l=n_0+1}^n \E\Bigl[\1_{\{\|\frac {b_l}{\bar b_n}  \bar \cH[l,n] \Upsilon_l\| / \sigma_n\geq \eps\}} \Bigl\|\frac {b_l}{\bar b_n}  \bar \cH[l,n] \Upsilon_l\Bigr\|^2\Big|\cF_{n-1}\Bigr]\leq C_2^2 \sigma_n^2 \sum_{l=n_0+1}^n \frac{b_l^2} {\bar b_n^{2}} \E\Bigl[\1_{\{ \| \Upsilon_l\| \geq \frac {\eps\bar b_n \sigma_n }{C_2 b_l}\}}
  \| \Upsilon_l\|^2\Big|\cF_{l-1}\Bigr]
$$
and the latter term tends to zero in probability on $\{\theta_n\to0\}$ by assumption.
Thus the central limit theorem is applicable and on $\{\theta_n\to 0\}$
$$
\frac1{\sigma_n} \Xi_n\Rightarrow \mathcal N(0, H^{-1}\Gamma (H^{-1})^\dagger).
$$
In view of (\ref{eq_rep}) the statement of the proposition follows once we showed that on $\{\theta_n\to0\}$ the terms
$$
\frac 1{\bar b_n} b_k \theta_k ,\text { \ for $k=0,\dots,n_0-1$, \ \ and \ \ } \frac 1{\bar b_n}\sum_{k=n_0}^n b_k \cH[n_0,k] \theta_{n_0}
$$
are of order $o(\sigma_n)$. The terms are of order $\mathcal O(\bar b_n^{-1})$ (since $\cH[n_0,k]$ is uniformly bounded by Lemma~\ref{le:H-1}) which is of order  $o(\sigma_n)$ since $\sum_{k=1}^\infty (b_k\delta_k)^2=\infty$.
\end{proof}

\begin{proof}[Proof of Theorem~\ref{theo:lin_main} III]
We control  $\E[\| \Xi_n\|]$. By Lemma~\ref{le:H-1} there exists a constant $C_1$ such that for sufficiently large $n_0$ and all $n\geq n_0$
\begin{align*}
\E[\|\Xi_n\|]\leq  \frac 1{\bar b_n} \E\bigl[   \bigl\|  \sum_{l=n_0+1}^n b_l \bar \cH[l,n] \Upsilon_l\bigr\|\bigr] \leq \frac 1{\bar b_n} C_1   \sum_{l=n_0+1}^n b_l\,  \E[\|\Upsilon_l\|]= o(\eps_n)
\end{align*}
so that $\Xi_n/\eps_n\to 0$, in probability. Moreover, by assumption $\lim_{n\to\infty} \eps_n^{-1} \bar b_n=0$ which implies that also the
remaining terms in the representation (\ref{eq_rep}) are of order $o(\eps_n)$.
\end{proof}

\section{Proof of Theorem~\ref{theo_RP_main}}\label{proof_main_SA}

In this section, we prove the  central limit theorem for Ruppert-Polyak averaging. We assume the setting of  Section~\ref{sec_1_1} and in particular we assume that all assumptions imposed in Theorem~\ref{theo_RP_main} are satisfied.\smallskip

Since all assumptions are translation invariant it suffices to show that the statement is true in the case where $\theta^*=0$. Note that the linear equation~(\ref{eq:lin}) is satisfied for the choice
$$
\Upsilon_n=\frac 1{\gamma_n}(\theta_n- \theta_{n-1})-H\theta_{n-1}.
$$
Moreover the case that $\theta_{n-1}+ \gamma_n (f(\theta_{n-1}) + D_n+ R_n)$  is  in $D$ enters on $\{\theta\to0\}$ for all but finitey many $n$'s (since $0$ is not allowed to be a boundary point) and for all these $n$'s one has
$$
\Upsilon_n= f(\theta_{n-1})-H\theta_{n-1} + D_n+ R_n.
$$
We will represent $\Upsilon_n$ as the sum of three terms and control their contribution by Theorem~\ref{theo:lin_main}. According to Lemma~\ref{le:norm_ex} we fix an inner product $\langle \cdot,\cdot\rangle$  for which assumption (L1) is satisfied and denote by $\|\cdot\|$ the respective norm. Assumptions (L2) and (L3) are satisfied by the choice of parameters.

Choose  $C_0> \mathrm{trace}(\Gamma)$ (with the trace being taken in the Hilbert space $\langle\cdot,\cdot\rangle$) and $\eps'>\eps>0$  such that Assumptions (D) and (E) are satisfied and  such that~(\ref{eq893467}) holds as a consequence of Theorem~\ref{thm1}. We consider
\begin{align*}
\Upsilon^\mathrm{(diff)} _n &=D_n \1\{\theta_{n-1}\in B(0,\eps')\} \\
\Upsilon^\mathrm{(rem)}_n& = (f(\theta_{n-1})-H\theta_{n-1}) \1 \{\theta_{n-1} \in B(0,\eps) \}\\
\Upsilon^\mathrm{(bias)}_n & = \Upsilon_n-\Upsilon_n^\mathrm{(diff)}-\Upsilon_n^\mathrm{(rem)}
\end{align*}
By linearity one has
$$
\theta_n=\theta_n^\mathrm{(bias)}+\theta_n^\mathrm{(diff)}+\theta_n^\mathrm{(rem)} \text{ \ and \ } \bar\theta_n=\bar\theta_n^\mathrm{(bias)}+\bar\theta_n^\mathrm{(diff)}+\bar\theta_n^\mathrm{(rem)}
$$
where the corresponding processes are the solutions of (\ref{eq:lin}) when choosing the corresponding~$\Upsilon$ and starting the first process in $\theta_0$ and the second and third one in zero. 

1) On the set $\{\theta_n\to 0\}$ the projection $\Pi$ will take effect only in a finite number of steps.
 Furthermore, the indicators in the definitions of $\Upsilon^{\mathrm{(diff)}} _n$ and $\Upsilon^\mathrm{(rem)} _n $ will be one for all but finitely many $n$. Hence, on $\{\theta_n\to0\}$ one has for all but finitely many $n$  that $\Upsilon^\mathrm{(bias)}_n=R_n$ so that by part I of Theorem~\ref{theo:lin_main}
$$
\lim_{n\to\infty}(\eps_n^\mathrm{bias})^{-1} \bar \theta_n^\mathrm{(bias)}= H^{-1} \mu,\text{ \ up to nullsets.}
$$

2) By definition, the process  $(\Upsilon_n^\mathrm{diff})_{n\in\N}$ is a sequence of square integrable martingale differences and on  $\{\theta_n\to 0\}$, one has  $\Upsilon_n^\mathrm{(diff)}=D_n$ for all but finitely many $n\in\N$. Hence, it follows by part II of Theorem~\ref{theo:lin_main} that on $\{\theta_n\to 0\}$
$$
(\eps_n^\mathrm{diff} )^{-1} 
 \bar \theta_n^\mathrm{(diff)} \Rightarrow \mathcal N(0,  H^{-1} \Gamma(H^{-1})^\dagger).
$$

3) For $n_0\in\N$ let 
$$
\Upsilon^{\mathrm{(rem}, n_0)}_{n} = (f(\theta_{n-1})-H\theta_{n-1}) \1 \{\theta_{l} \in B(0,\eps)\text{ for all }l=n_0,\dots,n-1 \}
$$
Using property (B) together with Theorem~\ref{thm1} we conclude that there are  constants $C_1$ and $C_2$ not depending on the choice of $n_0$ such that for sufficiently large $n\in\N$
 one has
\begin{align*}
 \E[\|\Upsilon^{\mathrm{(rem}, n_0)}_{n}\|] &\leq C_1 \E[\|\theta_{n-1}\|^{1+\lambda}  \1 \{\theta_{l} \in B(0,\eps')\text{ for }l=n_0,\dots,n-1 \}]\\
& \leq C_1  \E[\|\theta_{n-1}\|^2  \1 \{\theta_{l} \in B(0,\eps')\text{ for }l=n_0,\dots,n-1 \}]^{(1+\lambda)/2}
\leq C_2 \delta_{n-1}^{1+\lambda}.
\end{align*}
Recall that assumption (A) implies that $\delta_{n-1}=\mathcal O(\delta_n)$ so that there exists a constant $C_3$ not depending on $n_0$ such that $ \E[\|\Upsilon^{\mathrm{(rem}, n_0)}_{n}\|]\leq C_3 \delta_{n}^{1+\lambda}$ for sufficiently large $n$.

First suppose that~(\ref{eq74598}) is of order $o(\eps_n^\mathrm{bias})$.
Part III of Theorem~\ref{theo:lin_main} is applicable and we obtain that
\begin{align}\label{eq:84578}
\lim_{n\to\infty}  (\eps_n^\mathrm{bias})^{-1}  \bar\theta^{\mathrm{(rem,} n_0)}_n=0,\text{ in probability}.
\end{align}
Note that on  $$\Omega_{n_0}=\{\theta_n\to0\}\cap\{ \theta_{l} \in B(0,\eps')\text{ for all }l=n_0,\dots\}$$
 $\Upsilon^{\mathrm{(rem}, n_0)}$ agrees with $\Upsilon^\mathrm{(rem)}$. Since $\bigcup_{n_0\in\N} \Omega_{n_0}=\{\theta_n\to\theta^*\}$, we conclude that (\ref{eq:84578}) still remains true on $\{\theta_n\to \theta^*\}$ when replacing $ \bar\theta^{\mathrm{(rem,} n_0)}_n$ by $\bar\theta^{\mathrm{(rem)}}_n$. We choose $\vartheta_n:=\bar\theta_n^\mathrm{bias}+\bar \theta_n^\mathrm{(rem)}$ and observe that in combination with 1) and 2) we have proved the statement.
 
 Conversely in the case where~(\ref{eq74598}) is of order $o(\eps_n^\mathrm{diff})$ one concludes similarly with 1) and 2) that the statement follows for $\vartheta_n=\bar\theta_n^\mathrm{(bias)}$.

\section{Proof of Theorem~\ref{theo:main_pol}}\label{sec4}

Note that the states $(\theta_n)_{n\in\N_0}$ of the algorithm satisfy equation~(\ref{dynsys2}) when choosing
$$
R_n=\begin{cases} \E[Z(\mathbb U_n;\theta_{n-1},s_n,K_n)|\cF_{n-1}]-f(\theta_{n-1}), & \text{ if }\theta_{n-1}\in D_{\theta^*},\\
Z(\mathbb U_n;\theta_{n-1},s_n,K_n)-f(\theta_{n-1}), &\text{ else},
\end{cases}
$$
and 
$$
D_n= \begin{cases} Z(\mathbb U_n;\theta_{n-1},s_n,K_n)- \E[Z(\mathbb U_n;\theta_{n-1},s_n,K_n)|\cF_{n-1}],&\text{ if }\theta_{n-1}\in D_{\theta^*},\\
0, &\text{ else},
\end{cases}
$$
where $D_{\theta^*}$ is the neighbourhood of $\theta^*$ appearing in Assumptions~(\ref{as_tail_1}) and (\ref{as_cost2_2}). We set  
$$  \delta_n^\mathrm{bias}=M^{-\alpha s_n}, \ (\delta_n^\mathrm{diff})^2= M^{(1-\beta)s_n}/n^\varphi \ \text{ and } \ \delta_n=n^{-(\varphi+1)\mathfrak r+\frac 12(1-\psi)}$$
and
$$
\epsilon_n^\mathrm{bias}=\bar b_n^{-1} \sum_{l=1}^n b_l \delta_l^\mathrm{bias}\text{ \ and \ }\epsilon_n^\mathrm{diff}=\bar b_n^{-1}\sqrt{ \sum_{l=1}^n (b_l \delta_l^\mathrm{diff})^2}.
$$

1) \emph{Verification of the assumptions of Theorem~\ref{theo_RP_main}.} 
Property (A) is a direct consequence of the choice of the parameters and (B) holds by assumption. 

To verify property (C) we first derive the asymptotics of $M^{s_n}$. Using the definition of $\xi_n$ and the asymptotic equivalence  $\bar K_n\sim \kappa_K n^{\varphi+1}$ we conclude that for sufficiently large $n$
\begin{align}\label{eq:M_sn}
M^{s_n}=\kappa_s \bar K_n^{\frac 1{2\alpha-\beta+1}} M^{-\xi_n}\sim \kappa_s (\kappa_K n^{\varphi+1})^{\frac 1{2\alpha-\beta+1}} M^{-\xi_n}.
\end{align}
Consequently,
\begin{align}\label{eq234975}
\delta_n^\mathrm{bias}\approx n^{-(\varphi+1)\mathfrak r} \ \text{ and } \ (\delta_n^\mathrm{diff})^2\approx n^{(\varphi+1)\frac{1-\beta}{2\alpha-\beta+1} -\varphi}=n^{-2(\varphi+1)\mathfrak r+1}
\end{align}
so that $b_n\delta_n^\mathrm{bias} \approx n^{\rho-(\varphi+1)\mathfrak r}$  and $(b_n\delta_n^{\mathrm{diff}})^2\approx n^{-1+2(\rho+1)-2(\varphi+1)\mathfrak r}$
with the previous exponents being strictly bigger than $-1$ as a consequence of $\mathfrak r<\frac12$ and the second inequality of~(\ref{eq:as_1}). Now~(C) follows by elementary analysis. We mention that this also implies that
\begin{align}\label{eq256235}
\epsilon_n^\mathrm{bias} \approx  \epsilon_n^\mathrm{diff}\approx n^{-(\varphi+1)\mathfrak r}\gg \bar b_n^{-1}\log n.
\end{align}

Next we verify (D). On $\{\theta_n\to\theta^*\}$, one hsa
$$
(\delta_n^\mathrm{bias})^{-1}R_n= M^{\alpha s_n}  (\E[F_{s_n}(\theta,U)]\big|_{\theta=\theta_{n-1}}-f(\theta_{n-1}))\to \mu
$$
as consequence of~(\ref{as_bias1_1}). 
Note that by~(\ref{eq234975}) and $\psi<1$, $\delta_n^\mathrm{bias}=o(\delta_n)$. Furthermore, for  $\eps'>0$ one has
$$
\E[\1_{\{\theta_{n-1}\in B(\theta^*,\eps')\}} |R_n|^2]^{1/2} \leq \sup_{\theta\in B(\theta^*,\eps')} |\E[F_{s_n}(\theta,U)]-f(\theta)|.
$$
Hence (D) follows if we can show that for some $\eps'>0$
$$
\limsup_{n\to\infty} \ (\delta_n^{\mathrm{bias}})^{-1} \sup_{\theta\in B(\theta^*,\eps'')} |\E[F_{s_n}(\theta,U)]-f(\theta)|<\infty.
$$
If this were not be true for any $\eps'>0$ one could define an increasing $\N$-valued sequence   $(n_m)_{m\in\N}$ together with a $D$-valued sequence $(\zeta_{m})_{m\in\N}$ such that $\zeta_m\to \theta^*$ and $(\delta_{n_m}^{\mathrm{bias}})^{-1}  |\E[F_{s_{n_m}}(\zeta_m,U)]-f(\zeta_m)|\to \infty$. This would contradict property~(\ref{as_bias1_1}).

Next we verify (E). Note that if $\theta_{n-1}$ attains a value in $D_{\theta^*}$, then
\begin{align}\begin{split}\label{eq835734}
 \cov(D_n|\cF_{n-1}) & = \cov(Z(\mathbb U_n; \theta_{n-1},s_n,K_n)|\cF_{n-1})\\
&= \sum_{l=1}^{s_n} \frac 1{N_l(s_n,K_n)} \,\cov(F_l(U,\theta)-F_{l-1}(U,\theta))\big|_{\theta=\theta_{n-1}}.
\end{split}\end{align}
Recall that $N_l(s_n,K_n)$ is formed by rounding $\frac {K_n}{M^{s_n}} M^{\frac{\beta+1}2(s_n-l)}$ and for each $n\in\N$ the smallest value is obtained for $l=s_n$. As consequence of the first assumption in~(\ref{eq:as_1}) one has $\varphi-\frac{\varphi+1}{2\alpha-\beta+1}>0$ so that
$$
\frac {K_n}{M^{s_n}}\approx n^{\varphi-\frac{\varphi+1}{2\alpha-\beta+1}}\to \infty
$$
and rounding has no effect on the asymptotics of the sum appearing in~(\ref{eq835734}).
We thus get that on $\{\theta_n\to\theta^*\}$
\begin{align}\begin{split}\label{eq835734-333}
 \cov(D_n|\cF_{n-1}) & = \sum_{l=1}^{s_n} \frac {1+o(1)}{\frac {K_n}{M^{s_n}} M^{\frac {\beta+1}2(s_n-l)}} \,\cov(F_l(U,\theta)-F_{l-1}(U,\theta))\big|_{\theta=\theta_{n-1}}\\
 & =\frac {M^{\frac {1-\beta}2 s_n}}{K_n} \sum_{l=1}^{s_n} (1+o(1)) M^{\frac {1-\beta}2l} \, M^{\beta l} \,\cov(F_l(U,\theta)-F_{l-1}(U,\theta))\big|_{\theta=\theta_{n-1}}.
 \end{split}\end{align}
By assumption~(\ref{as_var1_1})  one has on  $\{\theta_n\to\theta^*\}$ for $n,l\to \infty$ 
$$M^{\beta l} \,\cov(F_l(U,\theta)-F_{l-1}(U,\theta))\big|_{\theta=\theta_{n-1}}\to \Gamma.$$
Furthermore, (\ref{as_tail_1}) implies existence of a constant $C$ such that for all $\theta\in D_{\theta^*}$ and $l\in\N$
$$\bigl \|M^{\beta l} \,\cov(F_l(U,\theta)-F_{l-1}(U,\theta))\bigr\|\leq C.$$
Consequently, it follows that  on  $\{\theta_n\to\theta^*\}$
 \begin{align*}
\lim_{n\to\infty} &(\delta_n^\mathrm{diff})^{-2} \cov(D_n|\cF_{n-1}) =\lim_{n\to\infty} \frac {M^{\frac {1-\beta}2 s_n}}{(\delta_n^\mathrm{diff})^{2}K_n} \sum_{l=1}^{s_n} M^{\frac {1-\beta}2l}\Gamma=\frac1{\kappa_K}  \frac 1{1- M^{-\frac {1-\beta}2}}\frac1{\varphi+1} \Gamma=: \Gamma'.
\end{align*}

Next we show the second part of (E). Recall that we need to show for arbitrarily fixed $\eps>0$ and $\iota_l^{(n)}=\eps \bar b_n \epsilon_n^\mathrm{diff}/b_l$ ($n,l\in\N$ with $l\leq n$) one has  
\begin{align}\label{eq83537}
\lim_{n\to\infty} (\epsilon_n^\mathrm{diff})^{-2}\sum_{l=1}^n \frac {b_l^2}{\bar b_n^2}\,\E[\1_{\{|D_l|>\iota_l^{(n)}\}}|D_l|^2|\cF_{l-1}]=0, \text{ \ in probability}.
\end{align}
First note that for $q>2$ as in Assumption~(\ref{as_tail_1}) one has
$$
\E[|D_l|^q|\cF_{l-1}]= \1_{B_\eps'(\theta^*)}(\theta_{l-1})\, \E\bigl[|Z(\mathbb U;\theta,s_n,K_n)- \E[Z(\mathbb U;\theta,s_n,K_n)]|^q\bigr]\Big|_{\theta=\theta_{l-1}}.
$$
As consequence of the Burkholder-Davis-Gundy inequality and Assumption~(\ref{as_tail_1}) there exists a constant $C$ only depending on $q$, the dimension $d$ and $C_\mathrm{TAIL}$ such that for all  $\theta\in B_{\eps'}(\theta^*)$ and $n\in\N$ one has 
\begin{align*}
\E\bigl[|Z(\mathbb U;\theta,s_n,K_n)- \E[Z(\mathbb U;\theta,s_n,K_n)]|^q\bigr]^{2/q}\leq C\, \sum_{r=1}^{s_n}\frac 1{N_r(s_n,K_n)} M^{-\beta r}.
\end{align*}
The term on the right hand side does not depend on the choice of $\theta$ and  following the same arguments as in~(\ref{eq835734}) we conclude that  it is of order $\mathcal O((\delta_n^\mathrm{diff})^2)$. 
We note that $\E[\1_{\{|D_l|>\iota_l^{(n)}\}} |D_l|^2|\cF_{l-1}]\leq (\iota_l^{(n)})^{-(q-2)}\,\E[|D_l|^q|\cF_{l-1}]$ so that~(\ref{eq83537}) follows once we showed that
$$
\lim_{n\to\infty} (\epsilon_n^\mathrm{diff})^{-2}\sum_{l=1}^n \frac {b_l^2}{\bar b_n^2} (\iota_l^{(n)})^{-(q-2)}(\delta_l^\mathrm{diff})^q =0 \text{ \ or, equivalently, \ }\lim_{n\to\infty} (\epsilon_n^\mathrm{diff})^{-q}\sum_{l=1}^n \frac {b_l^q}{\bar b_n^q} (\delta_l^\mathrm{diff})^q =0.
$$
Using the asymptotic formulas for $\delta_n^\mathrm{diff}$ and $\epsilon_n^\mathrm{diff}$, see~(\ref{eq234975}) and~(\ref{eq256235}), we conclude that the latter term is of order $n^{-\frac 12(q-2)}$ which proves the second part of (E).

To verify the third part we notice that for $\eps'>0$ with  $B(\theta^*,\eps')\subset D_{\theta^*}$ one has
\begin{align*}
\E[\1_{B(\theta^*,\eps')}(\theta_{n-1})\,|D_n|^2]\leq C_{\mathrm{TAIL}} \sum _{r=1}^{s_n} \frac 1{N_r(s_n,K_n)} M^{-\beta r}\approx (\delta_n^\mathrm{diff})^2
\end{align*}
so that together with~(\ref{eq234975})
\begin{align*}
\sqrt{\gamma_n}\,\E[\1_{B(\theta^*,\eps')}(\theta_{n-1})\,|D_n|^2]^{1/2}=\mathcal O\bigl(n^{-\frac\psi2-(\varphi+1)\mathfrak r+\frac 12}\bigr)=\mathcal O(\delta_n).
\end{align*}

We verify (F). One has
$$
\frac 1{\bar b_n} \sum_{l=1}^n b_l \delta_l^{1+\lambda}=\frac 1{\bar b_n} \sum_{l=1}^n l^{\rho+(1+\lambda)(-(\varphi+1)\mathfrak r-\frac 12(\psi-1))}\approx n^{(1+\lambda)(-(\varphi+1)\mathfrak r-\frac12(\psi-1))},
$$
if the exponent in the sum is strictly bigger than $-1$. In that case $\frac 1{\bar b_n} \sum_{l=1}^n b_l \delta_l^{1+\lambda}=o(\epsilon_n^\mathrm{bias})$ if and only if $(1+\lambda)(-(\varphi+1)\mathfrak r-\frac12(\psi-1))<-(\varphi+1)\mathfrak r$ (see~(\ref{eq256235})) which is by elementary calculations equivalent to $\psi>1-\frac{2\lambda}{1+\lambda}(\varphi+1)\mathfrak r$ which we assumed in~(\ref{eq:as_1}). Conversely, if the exponent is smaller than $-1$, then
$$
\frac 1{\bar b_n} \sum_{l=1}^n b_l \delta_l^{1+\lambda}=\mathcal O(n^{-(\rho+1)}\log n)
$$
and again $\frac 1{\bar b_n} \sum_{l=1}^n b_l \delta_l^{1+\lambda}=o(\epsilon_n^\mathrm{bias})$ since $-(\rho+1)<-(\varphi+1)\mathfrak r$ as a consequence of $\mathfrak r<\frac 12$ and the second inequality of~(\ref{eq:as_1}).

2)  \emph{Analysis of $\epsilon_n^\mathrm{bias}$.}   One has
\begin{align}\label{eq86255}
\sum_{k=1}^n b_k\delta_k^\mathrm{bias}=  \sum_{k=1}^n  k^\rho\, M^{-\alpha s_k} =\sum_{j=1}^{s_n-1} M^{-\alpha j} \sum_{k=y_j+1}^{y_{j+1}} k^\rho + \sum_{k=y_{s_n}+1}^ n k^\rho M^{-\alpha s_n},
\end{align}
where 
for $j\in\N$
$$
y_j:=\min\{n\in\N: \kappa_s \bar K_n^{\frac 1{2\alpha-\beta+1}}\geq M^j\}-1.
$$
Note that as $j\to\infty$
\begin{align}\label{eq94357}
y_j\sim \Bigl(\frac 1{\kappa_K}\bigl(\frac {M^j}{\kappa_s}\bigr)^{2\alpha-\beta+1}\Bigr)^{1/(\varphi+1)}\text{ \ and, hence,  \ } \frac 1{\rho+1} y_j^{\rho+1}\sim c_1 M ^{\mathfrak r_1j},  
\end{align}
where $\mathfrak r_1=\frac {\rho+1}{\varphi+1}(2\alpha-\beta+1)$ and  $c_1= \frac 1{\rho+1} \bigl(\frac 1{\kappa_K}\bigl(\frac {1}{\kappa_s}\bigr)^{2\alpha-\beta+1}\bigr)^{(\rho+1)/(\varphi+1)}$. Consequently,
$$
\sum_{r=y_j+1}^{y_{j+1}} k^\rho\sim \Bigl[\frac1{\rho+1} x^{\rho+1}\Bigr]_{y_j}^{y_{j+1}} \sim c_1
(M^{\mathfrak r_1}-1) M^{\mathfrak r_1 j}.
$$
As a consequence of~(\ref{eq:as_1}) one has $\mathfrak r_1 >\alpha$ and insertion of the above equivalences yields  as $n\to\infty$
\begin{align}\begin{split}\label{eq46748}
\sum_{j=1}^{s_n-1}M^{-\alpha j} \sum_{r=y_j+1}^{y_{j+1}} k^\rho &\sim c_1 (M^{\mathfrak r_1}-1)\sum_{j=1}^{s_n-1} M^{(\mathfrak r_1-\alpha)j}\\
&\sim  c_1 \underbrace{\frac{M^{\mathfrak r_1}-1}{M^{\mathfrak r_1-\alpha}-1}}_{=:c_2}  M^{(\mathfrak r_1-\alpha)s_n}.
\end{split}
\end{align}
Using again~(\ref{eq94357}) we obtain
\begin{align*}
\sum_{k=y_{s_n}+1}^n k^\rho M^{-\alpha s_n} \sim \frac 1{\rho+1} M^{-\alpha s_n} y_{s_n}^{\rho+1} \Bigl(\bigl(\frac n{y_{s_n}}\bigr)^{\rho+1}-1\Bigr)\sim c_1 M^{(\mathfrak r_1-\alpha) s_n} \Bigl(\bigl(\frac n{y_{s_n}}\bigr)^{\rho+1}-1\Bigr).
\end{align*}
We combine the previous estimate with the following consequence of~(\ref{eq94357}) and~(\ref{eq:M_sn})
\begin{align*}
y_{s_n}\sim \Bigl(\frac 1{\kappa_K} \bigl(\frac 1{\kappa_s} M^{s_n}\bigr)^{2\alpha-\beta+1}\Bigr)^{\frac 1{\varphi+1}}\sim n M^{-\frac {2\alpha-\beta+1}{\varphi+1}   \xi_n}
\end{align*}
and get with~(\ref{eq86255}), (\ref{eq46748}) and the definitions of $\mathfrak r_1, \mathfrak r, c_1$ and $\psi$  that
\begin{align*}
\sum_{k=1}^n b_k\delta_k^\mathrm{bias}&\sim   c_1  M^{(\mathfrak r_1-\alpha) s_n}   (c_2+M^{\frac {2\alpha-\beta+1}{\varphi+1}\xi_n}-1)\sim c_1(\kappa_s (\kappa_K n^{\varphi+1})^{\frac 1{2\alpha-\beta+1}} M^{-\xi_n})^{\mathfrak r_1-\alpha}
   (c_2+M^{\mathfrak r_1 \xi_n}-1)\\
   &=\frac1{\rho+1}\kappa_s^{-\alpha} \kappa_K^{-\frac \alpha{2\alpha-\beta-1}}  \psi_{\mathfrak r_1,-\alpha}(\xi_n) \, n^{-(\varphi+1)\mathfrak r+\rho+1}.
\end{align*}
Finally, we use that $\bar b_n\sim (\rho+1)^{-1} n^{\rho+1}$ to concude that
\begin{align*}
\epsilon_n^\mathrm{bias}&\sim \kappa_s^{-\alpha} \kappa_K^{-\mathfrak r}  \psi_{\mathfrak r_1,-\alpha}(\xi_n) \, n^{-(\varphi+1)\mathfrak r}=\eps_n^\mathrm{bias}.
\end{align*}

3) \emph{Analysis of $\epsilon_n^\mathrm{diff}$.}
We proceed similarly as in step two. Note that
\begin{align}\begin{split}\label{eq8234675}
\sum_{k=1}^n (b_k\delta_k^\mathrm{diff})^2&\sim \sum_{j=1}^{s_n-1} M^{(1-\beta)j} \sum_{k=y_{j}+1}^{y_{j+1}} k^{2\rho-\varphi}  +  M^{(1-\beta)s_n} \sum_{k=y_{s_n}+1}^ n k^{2\rho-\varphi}.
\end{split}\end{align}
Note that (\ref{eq:as_1}) implies that $2\rho-\varphi>-1$. By~(\ref{eq94357}) one has
$$
\frac 1{2\rho-\varphi+1} y_j^{2\rho-\varphi+1}\sim c_3 M^{(2\frac{\rho+1}{\varphi+1}-1)(2\alpha-\beta+1)j}\text{ \ for \ } c_3=\frac 1{2\rho-\varphi+1} \bigl(\frac 1{\kappa_K}\bigl(\frac {1}{\kappa_s}\bigr)^{2\alpha-\beta+1}\bigr)^{2\frac{\rho+1}{\varphi+1}-1}.
$$
As before we conclude that in terms of $\mathfrak r_2=(2\frac{\rho+1}{\varphi+1}-1)(2\alpha-\beta+1)$
$$
\sum_{k=y_j+1}^{y_{j+1}} k^{2\rho-\varphi}\sim \frac 1{2\rho-\varphi+1} y_j^{2\rho-\varphi+1}\bigl(\bigl( \frac {y_{j+1}}{y_j}\bigr)^{2\rho-\varphi+1}-1\bigr)\sim c_3 M^{\mathfrak r_2j}(M^{\mathfrak r_2}-1\bigr)
$$
and
$$
\sum_{k=y_{s_n}+1}^{n} k^{2\rho-\varphi}\sim c_3  M^{\mathfrak r_2 s_n}\bigl(\bigl(\frac n{y_{s_n}}\bigr)^{2\rho-\varphi+1} -1\bigr).
$$
With $c_4:=\frac{M^{\mathfrak r_2}-1}{M^{\mathfrak r_2+1-\beta}-1}$
we get that
\begin{align*}
  \sum_{j=1}^{s_n-1} M^{(1-\beta)j} \sum_{k=y_{j}+1}^{y_{j+1}} k^{2\rho-\varphi}\sim c_3 c_4 M^{(\mathfrak r_2+1-\beta)s_n}
  \end{align*}
We  insert the previous equation  into~(\ref{eq8234675}) and use equivalence~(\ref{eq:M_sn}) and the definitions of $\mathfrak r$, $c_3$ and $\psi$   to deduce that
\begin{align*}
\sum_{k=1}^n (b_k\delta_k^\mathrm{diff})^2&\sim c_3 M^{(\mathfrak r_2+1-\beta)s_n} \bigl( c_4+M^{\mathfrak r_2 \xi_n}-1\bigr)\\
&\sim   c_3 (\kappa_s (\kappa_K n^{\varphi+1})^{\frac1{2\alpha-\beta+1}}M^{-\xi_n})^{\mathfrak r_2 +1-\beta} \bigl( c_4+M^{\mathfrak r_2 \xi_n}-1\bigr)\\
&\sim  \frac 1{2\rho-\varphi+1}   \kappa_K^{\frac{1-\beta}{2\alpha-\beta+1}} \kappa_s^{1-\beta}\,\psi_{\mathfrak r_2, 1-\beta}(\xi_n)\, n^{2(\rho+1) -2(\varphi+1) \mathfrak r}
\end{align*}
Using that $\bar b_n\sim (\rho+1)^{-1} n^{\rho+1}$ we get that
\begin{align*}
\epsilon_n^\mathrm{diff}&\sim \frac{\rho+1}{\sqrt {2\rho-\varphi+1}} \kappa_K^{\frac 12\frac{1-\beta}{2\alpha-\beta+1}} \kappa_s^{\frac{1-\beta}2}\,\sqrt{\psi_{\mathfrak r_2, 1-\beta}(\xi_n)}\,  n^{-(\varphi+1)\mathfrak r}.
\end{align*}

4) \emph{Analysis of the cost  of the algorithm.} On $\{\theta_n\to\theta^*\}$ one has 
\begin{align*}
\mathrm{cost}_n&=\sum_{m=1}^n \sum_{k=1}^{s_m} N_k(s_m,K_m)\, C_k(\theta_{m-1})\\
&\sim \sum_{m=1}^n \sum_{k=1}^{s_m} \frac{K_m}{M^{s_m}} M^{\frac{\beta+1}2 (s_m-k)}\kappa_C M^k\\
&=\kappa_C \sum_{m=1}^n K_m\sum_{k=1}^{s_m} M^{-\frac {1-\beta}2(s_m-k)}\sim \frac{\kappa_C}{1-M^{-\frac{1-\beta}2}} \sum_{m=1}^n K_m\sim \frac{\kappa_C \kappa_K}{1-M^{-\frac{1-\beta}2}} n^{\varphi+1}. 
\end{align*}

5) \emph{Synthesis.} By Theorem~\ref{theo_RP_main} there exists an adapted sequence $(\vartheta_n)_{n\in\N}$ such that on $\{\theta_n\to\theta^*\}$ 
$$
(\eps_n^\mathrm{bias} )^{-1}(\vartheta_n-\theta^*)=H^{-1}\mu, \text{ in probability},
$$
and
$$
(\epsilon_n^\mathrm{diff})^{-1}(\bar \theta_n-\vartheta_n) \Rightarrow \mathcal N(0,H^{-1} \Gamma' (H^{-1})^\dagger).
$$
As we showed so far $\eps_n^\mathrm{bias}\sim \epsilon_n^\mathrm{bias}\approx \epsilon_n^\mathrm{diff}$ so that
$$
\vartheta_n-\theta^*-\eps_n^\mathrm{bias} H^{-1}\mu=o_P(\epsilon_n^\mathrm{bias})=o_P(\epsilon_n^\mathrm{diff}),
$$
where $o_P$ refers to small $o$ in probability. Hence, on $\{\theta_n\to\theta^*\}$
$$
(\epsilon_n^\mathrm{diff})^{-1}(\bar \theta_n-\vartheta_n)- (\epsilon_n^\mathrm{diff})^{-1}(\bar \theta_n-(\theta^*+\eps_n^\mathrm{bias} H^{-1}\mu)=o_P(1)
$$
so that
$$
 (\epsilon_n^\mathrm{diff})^{-1}(\bar \theta_n-(\theta^*+\eps_n^\mathrm{bias} H^{-1}\mu))\Rightarrow \mathcal N(0,H^{-1} \Gamma' (H^{-1})^\dagger)
$$
and by definition of $\Gamma'$
$$
\sqrt{\kappa_K(1-M^{-\frac{1-\beta}2})(\varphi+1)} (\epsilon_n^\mathrm{diff})^{-1}(\bar \theta_n-(\theta^*+\eps_n^\mathrm{bias} H^{-1}\mu))\Rightarrow \mathcal N(0,H^{-1} \Gamma (H^{-1})^\dagger).
$$
Now elementary computations show that the prefactor of $(\bar \theta_n-(\theta^*+\eps_n^\mathrm{bias} H^{-1}\mu))$ is equivalent to $(\eps_n^\mathrm{diff})^{-1}$.

\section{Proof of Theorem \ref{theo:main_pol_2}}\label{sec5}

We proceed similarly as in the proof of Theorem~\ref{theo:main_pol}. We note that the states $(\theta_n)_{n\in\N_0}$ of the algorithm satisfy equation~(\ref{dynsys2}) with 
$$
R_n=\begin{cases} \E[Z(\mathbb U_n;\theta_{n-1},s_n,K_n)|\cF_{n-1}]-f(\theta_{n-1}), & \text{ if }\theta_{n-1}\in D_{\theta^*},\\
Z(\mathbb U_n;\theta_{n-1},s_n,K_n)-f(\theta_{n-1}), &\text{ else},
\end{cases}
$$
and 
$$
D_n= \begin{cases} Z(\mathbb U_n;\theta_{n-1},s_n,K_n)- \E[Z(\mathbb U_n;\theta_{n-1},s_n,K_n)|\cF_{n-1}],&\text{ if }\theta_{n-1}\in B_{\eps'}(\theta^*),\\
0, &\text{ else},
\end{cases}
$$
where $D_{\theta^*}$ is the neighbourhood of $\theta^*$ appearing in Assumptions~(\ref{as_tail_1-1}) and (\ref{as_cost2_2}). We set 
$$\delta_n^\mathrm{bias} =n^{-\frac {\varphi+1}2} \sqrt{\log n}, \ \delta_n^\mathrm{diff}= (2\alpha \kappa_K)^{-1/2} n^{-\frac {\varphi}2}\sqrt{ \log_M n} \text{ \ and \ }\delta_n=n^{-\frac{\psi+\varphi}2} \sqrt{\log n}$$
and
$$
\epsilon_n^\mathrm{bias}=\bar b_n^{-1} \sum_{l=1}^n b_l \delta_l^\mathrm{bias}\text{ \ and \ }\epsilon_n^\mathrm{diff}=\bar b_n^{-1}\sqrt{ \sum_{l=1}^n (b_l \delta_l^\mathrm{diff})^2}.
$$

1) \emph{Verification of the assumptions of Theorem~\ref{theo_RP_main}.} 
Property (A) follows by elementary computations and property (B) by assumption. Note that
$$
b_k\delta_k^\mathrm{bias} \approx k^{\frac{2\rho-\varphi-1}2} \sqrt{\log k}\text{ \ and  \ } (b_k\delta_k^\mathrm{diff})^2 \approx k^{2\rho-\varphi} {\log k}
$$
and as consequence of the second assumption in~(\ref{eq:as_1-2}) the exponents $\frac{2\rho-\varphi-1}2$ and $2\rho-\varphi$ are strictly bigger than $-1$. Then property (C) follows easily by comparison with appropriate integrals.

Next we verify (D). By choice of $s_n$ one has $M^{-\alpha s_n}=o(\delta_n^\mathrm{bias})$. Hence, one has  on $\{\theta_n\to\theta^*\}$
$$
(\delta_n^\mathrm{bias})^{-1}R_n= \underbrace{(\delta_n^\mathrm{bias})^{-1} M^{-\alpha_{s_n} s_n}     }_{\to 0}  \underbrace{M^{\alpha_{s_n} s_n}  (\E[F_{s_n}(\theta,U)]\big|_{\theta=\theta_{n-1}}-f(\theta_{n-1}))}_{\text{eventually bounded}} \to 0
$$
as consequence of~(\ref{as_bias1_1-2}).  The second part of property (D) follows as in the proof of Theorem~\ref{theo:main_pol}.

Next we verify (E). Note that if $\theta_{n-1}$ attains a value in $B_{\eps'}(\theta^*)$, then
\begin{align}\begin{split}\label{eq835734-2}
 \cov(D_n|\cF_{n-1}) & = \cov(Z(\mathbb U_n; \theta_{n-1},s_n,K_n)|\cF_{n-1})\\
&= \sum_{r=1}^{s_n} \frac 1{N_r(K_n)} \,\cov(F_k(U,\theta)-F_{k-1}(U,\theta))\big|_{\theta=\theta_{n-1}}.
\end{split}\end{align}
One  has $\log_M (M^{-s_n} K_n) =-s_n+\varphi \log_M n +\mathcal O(1)$ and  $s_n\sim \frac{\varphi+1}{2\alpha}\log_M n$ by choice of $s_n$. The first assumption in~(\ref{eq:as_1-2}) implies that $\frac{\varphi+1}{2\alpha}<\varphi$ so that $\lim_{n\to\infty} M^{-s_n} K_n=\infty$. Hence, as consequence of~(\ref{as_var1_1-2}) one has on $\{\theta_n\to\theta^*\}$ as $n,r\to\infty$ with $r\leq s_n$
$$
K_n\frac 1{N_r(K_n)} \cov(F_k(U,\theta)-F_{k-1}(U,\theta))\big|_{\theta=\theta_{n-1}}\to \Gamma.
$$
Furthermore there exists a constant $C$ such that for every $n\in\N$ and $\theta\in D_{\theta^*}$
$$
\bigl\|K_n\frac 1{N_r(K_n)} \cov(F_k(U,\theta)-F_{k-1}(U,\theta))\bigr\|\leq C.
$$
Altogether we thus get with~(\ref{eq835734-2}) that on $\{\theta_n\to\theta^*\}$
$$
 \cov(D_n|\cF_{n-1})\sim \frac {s_n}{K_n} \,\Gamma\sim (\delta_n^\mathrm{diff})^2 \,\Gamma.
$$
The second part of (E) is shown in complete analogy to the proof of Theorem~\ref{theo:main_pol}.

To verify the third part we notice that as long as $B(\theta^*,\eps')\subset D_{\theta^*}$ one has
\begin{align*}
\E[\1_{\{\theta_{n-1}\in B(\theta^*,\eps'')\}}|D_n|^2]\leq C_{\mathrm{TAIL}} \sum _{l=1}^{s_n} \frac 1{N_l(K_n)} M^{-l}\approx (\delta_n^\mathrm{diff})^2
\end{align*}
so that using that $\delta_n^\mathrm{diff}\approx n^{-\varphi/2} \sqrt{\log n}$
\begin{align*}
\sqrt{\gamma_n}\,\E[\1_{\{\theta_{n-1}\in B(\theta^*,\eps'')\}}|D_n|^2]^{1/2}=\mathcal O\bigl(n^{-\frac\psi2-(\varphi+1)\mathfrak r+\frac 12}\bigr)=\mathcal O(\delta_n).
\end{align*}

We verify (F). We distinguish two cases. If $\rho-(1+\lambda)\frac{\varphi+\psi}2>-1$ one has
$$
\frac 1{\bar b_n} \sum_{k=1}^n b_k \delta_k^{1+\lambda}\approx \frac 1{n^{\rho+1}} \sum_{k=1}^n k^{\rho-(1+\lambda)\frac{\varphi+\psi}2} (\log k)^{(1+\lambda)/2} \approx n^{-(1+\lambda)\frac{\varphi+\psi}2} (\log n)^{(1+\lambda)/2} ,
$$
which is of oder $o(\eps_n^\mathrm{diff})$ since $\eps_n^\mathrm{diff}\approx n^{-\frac{\varphi+1}2}\sqrt{\log n}$ and $\frac{\psi+\varphi}2(1+\lambda)>\frac {\varphi+1}2$ as a consequence of the third assumption of~(\ref{eq:as_1-2}). If $\rho-(1+\lambda)\frac{\varphi+\psi}2\leq -1$, then
$$
\frac 1{\bar b_n} \sum_{l=1}^n b_l \delta_l^{1+\lambda}\mathcal O\Bigl( \frac 1{n^{\rho+1}} \sum_{l=1}^n l^{-1} (\log l)^{(1+\lambda)/2}\Bigr) =\mathcal O \bigl( n^{-(\rho+1)}  (\log n)^{(3+\lambda)/2} \bigr)
$$
which is of order $o(\eps_n^\mathrm{diff})$ since $\rho+1>\frac {\varphi+1}2$ by the second assumption of of~(\ref{eq:as_1-2}).

2) \emph{The asymptotics of $\epsilon^\mathrm{bias}_n$ and $\epsilon_n^\mathrm{diff}$.}
One has
\begin{align*}
\epsilon_n^\mathrm{bias} \sim (\rho+1) n^{-(\rho+1)} \sum_{k=1}^n k^{\rho-\frac{\varphi+1}2}\sqrt {\log k} \sim \frac {2(\rho+1)}{2\rho-\varphi+1} n^{-\frac{\varphi+1}2}\sqrt {\log n},
\end{align*}
where we used that the exponent $\rho-\frac{\varphi+1}2$ is strictly bigger than $-1$ as observed in the verification of (C).

  Note that using that $2\rho-\varphi>-1$ as consequence of the second assumption in~(\ref{eq:as_1-2}) we get that
 \begin{align}\begin{split}\label{eq8234675-2}
\sum_{k=1}^n (b_k\delta_k^\mathrm{diff})^2&= \sum_{k=1}^{n} k^{2\rho} (2\alpha\kappa_K)^{-1} k^{-\varphi} \log_M k\\
&\sim (2\alpha \kappa_K (2\rho-\varphi+1))^{-1} n^{2\rho-\varphi+1} \log_Mn.
\end{split}\end{align}
Consequently,
\begin{align}\begin{split}\label{eq78943556}
(\epsilon_n^\mathrm{diff})^2&=\frac 1{\bar b_n^2} \sum_{k=1}^n (b_k \delta_k^\mathrm{diff})^2 \sim (\rho+1)^2(2\alpha \kappa_K (2\rho-\varphi+1))^{-1}n^{-(\varphi+1)} \log_Mn\\
&=\frac 1{2\alpha\kappa_K} \frac {\bigl(\frac{\rho+1}{\varphi+1}\bigr)^2}{2\frac {\rho+1}{\varphi+1}-1} n^{-(\varphi+1)} \log_Mn^{\varphi+1}=(\eps_n^\mathrm{diff})^2.
\end{split}\end{align}

3) \emph{Analysis of the cost  of the algorithm.}  As a consequece of~(\ref{as_cost_2}) and~(\ref{as_cost2_2}) and the choice of $K_n$ and $s_n$, one has on $\{\theta_n\to\theta^*\}$ 
$$
 \sum_{k=1}^{s_n} N_k(K_n)\, C_k(\theta_{n-1})\sim  \kappa_C s_n K_n\sim \kappa_C \kappa_K  \alpha^{-1}(\varphi+1)n^\varphi \log_M n^{\frac{\varphi+1}2}
$$
so that
\begin{align*}
\mathrm{cost}_n&=\sum_{m=1}^n \sum_{k=1}^{s_m} N_k(K_m)\, C_k(\theta_{m-1}) \sim \kappa_C \kappa_K  \alpha^{-1}(\varphi+1) \sum_{m=1}^n  m^\varphi \log_M m^{\frac{\varphi+1}2}\\
&\sim \kappa_C \kappa_K  \alpha^{-1} n^{\varphi+1} \log_M n^{\frac{\varphi+1}2}. 
\end{align*}
On $\{\theta_n\to\theta^*\}$ one has
$$
\log_M \mathrm{cost_n}\sim \log_M n^{\varphi+1}
\text{ \ and thus \ }
n^{-(\varphi+1)} \sim \frac{\kappa_C \kappa_K}{2\alpha} \frac{\log_M \mathrm{cost}_n}{\mathrm{cost_n}}.
$$
Consequently, on $\{\theta_n\to\theta^*\}$
$$
\eps_n^\mathrm{diff} \sim  \frac{\sqrt{\kappa_C}}{2\alpha} \frac {\frac{\rho+1}{\varphi+1}}{\sqrt{2\frac{\rho+1}{\varphi+1}-1}}  \,\frac{\log_M \mathrm{cost}_n}{\sqrt{\mathrm{cost}_n}}. 
$$

4)  \emph{Synthesis.}  We showed that $\epsilon_n^\mathrm{bias}\approx\epsilon_n^\mathrm{diff}$. As a consequence of Theorem~\ref{theo_RP_main} we have on $\{\theta_n\to\theta^*\}$
$$
(\epsilon_n^\mathrm{diff})^{-1} (\bar\theta_n-\theta^*) \Rightarrow \mathcal N(0,H^{-1}\Gamma(H^{-1})^\dagger)
$$
and due to~(\ref{eq78943556}) we can replace $\epsilon_n^\mathrm{diff}$ by $\eps_n^\mathrm{bias}$. The reformulation of $\eps_n^\mathrm{diff}$ in terms of $\mathrm{cost}_n$ was already derived in part 3).

\begin{appendix}

\section{$L^2$-error bounds}

\begin{theorem}\label{thm1} 
Suppose that  $(\theta_n)_{n\in\N_0}$ satisfies the recursion~(\ref{dynsys2}) and suppose that $\theta^*$ is an $L$-contracting zero of $f$. 
Let $(\gamma_n)_{n\in\N}$ and $(\delta_n)_{n\in\N}$  sequences of strictly positive reals with $\sum_{n=1}^\infty \gamma_n=\infty$ and
\begin{align}\label{eq23478}
\limsup_{n\to\infty} \frac{1}{\gamma_n}\,\frac{\delta_{n-1} - \delta_{n}}{\delta_{n-1}} \leq L.
\end{align}
Further suppose that there exists $\eps'\in(0,\infty)$ with
$$
\limsup_{n\to\infty} \delta_n^{-1} \E\bigl[ \1_{ B_{\eps'}(\theta^*)} (\theta_{n-1}) |R_n |^2\bigr]^{1/2}<\infty
$$
and
$$
\limsup_{n\to\infty}\Bigl(\frac{\delta_n}{\sqrt{\gamma_n}}\Bigr)^{-1} \, \E[\1_{ B_{\eps'}(\theta^*)} (\theta_{n-1})
 |D_n|^2]^{1/2}<\infty.
$$
Then there exist $\eps,C \in (0,\infty)$  such that for all $n_0\in \N$,
\begin{align}\label{eq893467}
 \limsup_{n\to\infty} \delta_n^{-1} \E\bigl[\1_{\{\theta_m\in B_\eps(\theta^*)\text{ for } m=n_0,\dots,n-1\}} |\theta_n-\theta^*|^2\bigr]^{1/2}\leq C.
\end{align}
\end{theorem}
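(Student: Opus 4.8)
The plan is to reduce the claim to a one-step $L^2$-recursion of Gronwall type and to iterate it. Since every hypothesis is invariant under the translation $\theta\mapsto\theta-\theta^*$, I would first assume $\theta^*=0$. Because $0$ is an $L$-contracting zero, $\max\{\mathrm{Re}\,\lambda:\lambda\text{ eigenvalue of }H\}<-L$; fixing $L'$ strictly between $L$ and $-\max\mathrm{Re}\,\lambda$, Lemma~\ref{le:norm_ex} supplies an inner-product norm $\|\cdot\|$ on $\R^d$ with $\|\1+\gamma H\|\le 1-\gamma L'$ for all small $\gamma\ge 0$. Using $f(\theta)=H\theta+o(|\theta|)$ I would then fix $\eps\in(0,\eps')$ so small that $\overline{B(0,2\eps)}\subset D$ (possible since $0\in\mathrm{int}(D)$), that $\gamma_n\le\eps_0$ for all large $n$, and that $\|f(\theta)-H\theta\|\le\eta\|\theta\|$ on $B(0,\eps)$ for some $\eta$ with $L_0:=L'-\eta>L$. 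For a fixed $n_0$ put $u_n:=\prod_{m=n_0}^{n-1}\1_{B(0,\eps)}(\theta_m)$ — so that $u_n$ is $\cF_{n-1}$-measurable, $u_{n+1}\le u_n$, and $u_n\le\1_{B(0,\eps)}(\theta_{n-1})$ for $n>n_0$ — and $V_n:=\E[u_n\|\theta_n\|^2]$. The goal becomes $\limsup_{n\to\infty}\delta_n^{-2}V_n\le C^2$ with $C$ independent of $n_0$; equivalence of norms then yields~(\ref{eq893467}).

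Next I would dispose of the projection and set up the recursion. On $\{u_n=1\}$ and for large $n$, the un-projected point $\theta_{n-1}+\gamma_n(f(\theta_{n-1})+R_n+D_n)$ lies in $\overline{B(0,2\eps)}\subset D$ — so $\Pi$ acts trivially and $\theta_n=\theta_{n-1}+\gamma_n(f(\theta_{n-1})+R_n+D_n)$ — unless $\|R_n\|+\|D_n\|\ge\eps/(2\gamma_n)$; by Markov's inequality and the two $L^2$-hypotheses the contribution of this exceptional event to $V_n$ is $\mathcal O(\gamma_n\delta_n^2)$, of the same order as the martingale term below, so I would ignore the projection henceforth. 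Writing $e_n:=\theta_n$ and $B:=(\1+\gamma_nH)e_{n-1}+\gamma_n(f(e_{n-1})-He_{n-1})$ — which is $\cF_{n-1}$-measurable with $\|B\|\le(1-L_0\gamma_n)\|e_{n-1}\|$ on $\{u_n=1\}$ — I would expand $\|e_n\|^2$ from $e_n=B+\gamma_nR_n+\gamma_nD_n$, take $\E[u_n\,\cdot\,]$, and use: $\E[u_n\langle B,D_n\rangle]=0$ (as $D_n$ is a martingale difference and $B$ is predictable); $\E[u_n\langle B,R_n\rangle]\le(1-L_0\gamma_n)V_{n-1}^{1/2}C_R\delta_n$, $\E[u_n|\langle R_n,D_n\rangle|]\le C_RC_D\delta_n^2/\sqrt{\gamma_n}$, $\E[u_n\|R_n\|^2]\le C_R^2\delta_n^2$ and $\E[u_n\|D_n\|^2]\le C_D^2\delta_n^2/\gamma_n$ (Cauchy--Schwarz together with $u_n\le\1_{B(0,\eps')}(\theta_{n-1})$ and the limsup bounds, $C_R,C_D$ the corresponding constants). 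Absorbing the bias cross term by Young's inequality, $2\gamma_nC_R\delta_n V_{n-1}^{1/2}\le\lambda\gamma_nV_{n-1}+\lambda^{-1}C_R^2\gamma_n\delta_n^2$ with $\lambda$ chosen so small that $(1-L_0\gamma_n)^2+\lambda\gamma_n\le 1-2L_{**}\gamma_n$ for some $L_{**}\in(L,L_0)$ and all large $n$, I reach
\[ V_n\le(1-2L_{**}\gamma_n)\,V_{n-1}+C_8\,\gamma_n\delta_n^2\qquad(n\ge N) \]
for suitable $C_8,N$, the dominant source $C_8\gamma_n\delta_n^2$ stemming from the term $\gamma_n^2\|D_n\|^2$.

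Finally I would iterate. With $t_n:=\sum_{k\le n}\gamma_k$ and $\prod_{m=k+1}^n(1-2L_{**}\gamma_m)\le e^{-2L_{**}(t_n-t_k)}$, unrolling from $N$ gives $V_n\le e^{-2L_{**}(t_n-t_N)}V_N+C_8\sum_{k=N+1}^n\gamma_k\delta_k^2 e^{-2L_{**}(t_n-t_k)}$. The hypothesis~(\ref{eq23478}) implies, for any $\eps''>0$, that $\delta_k\le\delta_n e^{(L+\eps'')(t_n-t_k)}$ for $N\le k\le n$ with $N=N(\eps'')$ large; choosing $\eps''<L_{**}-L$ we obtain $\sum_{k=N+1}^n\gamma_k\delta_k^2 e^{-2L_{**}(t_n-t_k)}\le\delta_n^2\sum_{k=N+1}^n\gamma_k e^{-2(L_{**}-L-\eps'')(t_n-t_k)}\le c_0\,\delta_n^2$, the last bound being the elementary estimate $\sum_k\gamma_k e^{-2c(t_n-t_k)}\le e^{2c\gamma_1}/(2c)$ for $c>0$. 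The same hypothesis gives $\delta_n\ge\delta_N e^{-(L+\eps'')(t_n-t_N)}$, so $e^{-2L_{**}(t_n-t_N)}V_N=o(\delta_n^2)$ as $n\to\infty$ because $L_{**}>L$ and $\sum\gamma_n=\infty$. Hence $\limsup_n\delta_n^{-2}V_n\le C_8c_0$, a constant built only from $C_R,C_D,L,L_{**},\gamma_1$ and in particular not depending on $n_0$; for $n_0<N$ one first uses the inclusion $\{\theta_m\in B(0,\eps):m=n_0,\dots,n-1\}\subseteq\{\theta_m\in B(0,\eps):m=N,\dots,n-1\}$ to reduce to $n_0=N$, and there $V_N<\infty$ since $u_N\le\1_{B(0,\eps)}(\theta_{N-1})$ and $\theta_N$ is obtained by a single step started in $B(0,\eps)$.

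The step I expect to be the main obstacle is the treatment of the bias cross term $\gamma_n\langle B,R_n\rangle$: it must be absorbed into the contraction rather than carried as an independent source of size $\gamma_n\delta_n$, for otherwise one is driven to take square roots in the recursion and the iteration returns the wrong order $\delta_n/\sqrt{\gamma_n}$. The attendant bookkeeping of constants is tight only in that the squared contraction rate $2L_0$ must strictly beat the exponential growth rate $2L$ of $\delta_k^2/\delta_n^2$ — which is exactly what strict $L$-contractivity of $\theta^*$ provides ($L_0>L$, hence $L_{**}>L$). The projection and the initial-segment terms are, by contrast, only of the negligible orders $\gamma_n\delta_n^2$ and $o(\delta_n^2)$ respectively, and need no new idea.
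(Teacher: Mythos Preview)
Your proof is correct and follows essentially the same route as the paper's: reduce to $\theta^*=0$, pass to the adapted norm from Lemma~\ref{le:norm_ex}, set up a one-step $L^2$-recursion for the localized error, and iterate. The only noteworthy difference is bookkeeping: the paper normalizes upfront by setting $\varphi_n:=\delta_n^{-2}\E[\1_{\mathbb D_n}\|\theta_n\|^2]$ and absorbs the ratio $\delta_{n-1}/\delta_n\le 1+(L+\eps_0/2)\gamma_n$ (from~(\ref{eq23478})) directly into the contraction factor, yielding the cleaner recursion $\varphi_n\le(1-\tfrac14\eps_0\gamma_n)\varphi_{n-1}+\kappa\gamma_n$ whose iteration is a one-liner; you instead carry $V_n$ unnormalized and use $\delta_k\le\delta_n e^{(L+\eps'')(t_n-t_k)}$ only at the final summation. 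Both placements of~(\ref{eq23478}) are valid and lead to the same conclusion.

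One caveat on the projection: your Markov-inequality argument bounds the \emph{probability} of the exceptional event by $\mathcal O(\gamma_n\delta_n^2)$, but on that event $\theta_n=\Pi(\cdot)\in\partial D$ is not a priori bounded for a general measurable $\Pi$, so the contribution to $V_n$ is not controlled without an additional hypothesis (e.g.\ $\Pi$ non-expansive toward $\theta^*$). The paper's proof has the same implicit gap (it writes $\|\theta_n\|\le\|\theta_{n-1}+\gamma_n(\cdots)\|$ without justifying this for the stated $\Pi$), so this is not a defect of your argument relative to the paper; in applications $\Pi$ is always taken to satisfy such a contraction.
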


For completeness we provide a proof.

\begin{proof}
Since the statement of the theorem is translation invariant we can and will assume that $\theta^*=0$.
Since $H$ is  $L$-contracting  there  exists an inner product $\langle\cdot,\cdot\rangle$ on $\R^d$ and $\eps_0>0$ such that for sufficiently large $n$ 
$$
\|I+\gamma_n H\|\leq 1-\gamma_n (L+\eps_0)
$$
for the induced matrix norm. Further there exist finite strictly positive constants $C_1,C_2,C_3 ,\eps'$ such that for sufficiently large $n$
\begin{align}\label{eq87435}
 \E[\1_{ B_{\eps'}(0)} (\theta_{n-1}) \|D_n\|^2]\leq C_1 \frac{\delta_n^2}{\gamma_n}, \qquad 
   \E[\1_{B_{\eps'}(0)} (\theta_{n-1} ) \|R_n\|^2]\leq C_2 \delta^2_n
\end{align}
and as a consequence of~(\ref{eq23478})
\begin{align}\label{eq87436}
\frac {\delta_{n-1}}{\delta_n} \leq C_3.
\end{align}
Moreover, assuming that $\eps'>0$ is chosen sufficiently small one has for all $\theta\in B_{\eps'}(0)$
$$
\|f(\theta)-H\theta\|\leq \eps_1 \|\theta\|
$$
where $\eps_1:= \frac 1{20}C_3^{-2}\eps_0$.\smallskip

We fix $n_0\in\N$ arbitrarily and consider for $n\geq n_0$
$$\varphi_n=\delta_n^{-2} \E[\1_{\ID_n}\|\theta_n\|^2],$$
where $\ID_n:=\{\forall l=n_0,\dots,n-1: \|\theta_l\|\leq \eps'\}$.
One has  for $n\geq n_0$
\begin{align*}
\varphi_n&\leq \delta_n^{-2} \E[\1_{\ID_{n-1}} \|\theta_{n-1} +\gamma_n(f(\theta_{n-1})+R_n+D_n)\|^2]\\
&= \underbrace{ \delta_n^{-2}  \E[\1_{\ID_{n-1}} \|\theta_{n-1} +\gamma_n(f(\theta_{n-1})+R_n)\|^2]}_{=:I_1(n)}+\underbrace{ \frac {\gamma_n^2}{\delta_n^2} \E[\1_{\ID_{n-1}}\|D_n\|^2]}_{=:I_2(n)}.
\end{align*}
By assumption the second summand satisfies $I_2(n)\leq  C_1 \gamma_n$ for sufficiently large $n$.
In order to estimate the first summand $I_1(n)$ we first note that for sufficiently large $n$
$$
\frac {\delta_{n-1}}{\delta_n}=1+\frac{\delta_{n-1}-\delta_n}{\delta_n}\leq 1+ (L+\eps_0/2)\gamma_n,
$$
$$
 (1-\gamma_n(L+\eps_0))^2(1+(L+\eps_0/2)\gamma_n)^2=1-\eps_0\gamma_n +o(\gamma_n)\leq 1-\eps_0
$$
and
$$
 \E[\1_{\ID_{n-1}} \|\theta_{n-1}\| \|R_n\|]\leq \eps_1  \E[\1_{\ID_{n-1}} \|\theta_{n-1}\|^2]+\frac 1{\eps_1} \E[\1_{\ID_{n-1}} \|R_n\|^2].
$$
Consequently, we get with the triangle inequality that  for sufficiently large $n$ 
\begin{align*}
 I_1(n)&=\delta_n^{-2}  \E\bigl[\1_{\ID_{n-1}}\bigl( \|\theta_{n-1}+\gamma_n H\theta_{n-1}\|^2  +2\gamma_n  \langle \theta_{n-1}+\gamma_n H\theta_{n-1}, f(\theta_{n-1})-H\theta_{n-1} +R_n\rangle \\
 &\qquad\qquad + \gamma_n^2 \|f(\theta_{n-1})-H\theta_{n-1} +R_n \|^2\bigr)\Bigr]\\
 & \leq  (1-\gamma_n(L+\eps_0))^2(1+(L+\eps_0/2)\gamma_n)^2 \varphi_{n-1} \\
 &\qquad +2\gamma_n \delta_n^{-2} \E[\1_{\ID_{n-1}}  \|\theta_{n-1}\| (\|f(\theta_{n-1})-H\theta_{n-1}\|+\|R_n\|)]\\
 &\qquad +2\gamma_n^2 \delta_n^{-2}  \E[\1_{\ID_{n-1}} (\|f(\theta_{n-1})-H\theta_{n-1}\|^2+\|R_n\|^2)] \\
 &\leq (1-\sfrac12 \eps_0\gamma_n)\varphi_{n-1}\\
 &\qquad +2\gamma_n\delta_n^{-2}\bigl( 2\eps_1 \E[\1_{\ID_{n-1}}\|\theta_{n-1}\|^2] +\frac 1{\eps_1}  \E[\1_{\ID_{n-1}}\|R_n\|^2]\bigr)\\
 &\qquad + 2\gamma_n^2 \delta_n^{-2}  \E[\1_{\ID_{n-1}} (\|\theta_{n-1}\|^2+\|R_n\|^2)] .
\end{align*}
As a consequence of~\eqref{eq87435} and~\eqref{eq87436} one has
$$\delta_n^{-2}  \E[\1_{\ID_{n-1}}\|\theta_{n-1}\|^2]\leq C_3^2 \varphi_{n-1}  
\text{ \ and \ }\delta_n^{-2}  \E[\1_{\ID_{n-1}}\|R_{n}\|^2]\leq {C_2}$$ and we thus obtain that for sufficiently large $n$
\begin{align*}
I_1(n)&\leq  (1-\sfrac12 \eps_0\gamma_n)\varphi_{n-1}+ 4\gamma_n \eps_1C_3^2 \varphi_{n-1} +\frac{2\gamma_n}{\eps_1} C_2+ 2\gamma_n^2C_3^2\varphi_{n-1}+2\gamma_n^2C_2\\
&=  \bigl(1- (\sfrac12 \eps_0-4\eps_1 C_3^2-2\gamma_nC_3^2)\gamma_n\bigr)\varphi_{n-1}+\bigl(\frac{2}{\eps_1} C_2+2\gamma_nC_2\bigr)\gamma_n.
\end{align*}
By choice of $\eps_1$ we have that $\sfrac12 \eps_0-4\eps_1 C_3^2-2\gamma_nC_3^2\geq \frac 14 \eps_0$ for sufficiently large $n$. 
Combining this with the estimate for $I_2(n)$ we thus get that for sufficiently large $n$
$$
\varphi_n\leq (1-\sfrac14 \eps_0 \gamma_n)\varphi_{n-1}+ \kappa \gamma_n
$$
where $\kappa:=\frac{4}{\eps_1} C_2$. We rewrite the inequality as
$$
\varphi_n-\sfrac{4\kappa}{\eps_0} \leq (1-\sfrac14 \eps_0 \gamma_n)(\varphi_{n-1}-\sfrac{4\kappa}{\eps_0})
$$This implies that for a sufficiently large $n_1\geq n_0$ one has for all $n\geq n_1$
$$
\varphi_n -\sfrac{4\kappa}{\eps_0} \leq \bigl(\varphi_{n_1}-\sfrac{4\kappa}{\eps_0}\bigr) \prod_{l=n_1+1}^n (1-\sfrac14 \eps_0 \gamma_l)\to 0,
$$
where convergence holds since $\sum_{l=n_1+1}^\infty \gamma_l=\infty$. It follows that 
$$
\limsup_{n\to\infty}\varphi_n\leq \sfrac {4\kappa}{\eps_0}
$$
with the constant on the right hand side not depending on the choice of $n_0$.
\end{proof}

\end{appendix}

\bibliographystyle{plain}
\bibliography{stoch_approx}

\end{document}